\newtheorem{theorem}{Theorem}[section]
\newtheorem{lemma}{Lemma}[section]
\newtheorem{proposition}{Proposition}[section]
\theoremstyle{definition}
\newtheorem{remark}{Remark}[section]
\numberwithin{equation}{section}
\newcommand\blfootnote[1]{\begingroup\renewcommand\thefootnote{}\footnote{#1}\addtocounter{footnote}{-1}\endgroup}
\begin{document}

\title{
{\bf\Large
Positive subharmonic solutions \\ to nonlinear ODEs with indefinite weight}}

\author{
\vspace{1mm}
\\
{\bf\large Alberto Boscaggin}
\vspace{1mm}\\
{\it\small Department of Mathematics, University of Torino}\\
{\it\small via Carlo Alberto 10}, {\it\small 10123 Torino, Italy}\\
{\it\small e-mail: alberto.boscaggin@unito.it}\vspace{1mm}\\
\vspace{1mm}\\
{\bf\large Guglielmo Feltrin}
\vspace{1mm}\\
{\it\small SISSA - International School for Advanced Studies}\\
{\it\small via Bonomea 265}, {\it\small 34136 Trieste, Italy}\\
{\it\small e-mail: guglielmo.feltrin@sissa.it}\vspace{1mm}}

\date{}

\maketitle

\vspace{-2mm}

\begin{abstract}
\noindent
We prove that the superlinear indefinite equation
\begin{equation*}
u'' + a(t)u^{p} = 0,
\end{equation*}
where $p > 1$ and $a(t)$ is a $T$-periodic sign-changing function satisfying the (sharp) mean value condition
$\int_{0}^{T} a(t)~\!dt < 0$, has positive subharmonic solutions of order $k$ for any large integer $k$,
thus providing a further contribution to a problem raised by G.~J.~Butler in its pioneering paper \cite{Bu-76}. 
The proof, which applies to a larger class of indefinite equations, combines coincidence degree theory (yielding a positive harmonic solution) 
with the Poincar\'{e}-Birkhoff fixed point theorem (giving subharmonic solutions oscillating around it).
\blfootnote{\textit{AMS Subject Classification: Primary: 34C25, Secondary: 34B18, 37J10, 47H11}.}
\blfootnote{\textit{Keywords:} Subharmonics, Indefinite weight, Poincar\'{e}-Birkhoff theorem, Morse index, Coincidence degree.}
\end{abstract}

\section{Introduction}\label{section-1}

In this paper, we study the existence of \textit{positive subharmonic solutions} for nonlinear second order
ODEs with indefinite weight. To describe our results, throughout the introduction we focus our attention to the superlinear indefinite
equation
\begin{equation}\label{eqmod}
u'' + a(t)u^{p} = 0,
\end{equation} 
with $a(t)$ a sign-changing $T$-periodic function and $p > 1$, which has been indeed the main motivation for our investigation. 

Boundary value problems associated with sign-indefinite equations are quite popular in the qualitative theory of nonlinear ODEs. The existence of
\textit{oscillatory} periodic solutions to superlinear indefinite equations like
\begin{equation}\label{eqnodal}
u'' + a(t)g(u) = 0,
\end{equation}
where $g(u) \sim |u|^{p-1} u$ with $p > 1$, 
was first investigated by Butler in the pioneering paper \cite{Bu-76}. Later on, along this line of research, several contributions followed
(cf.~\cite{CaDaPa-02,PaZa-02,PaZa-04,TeVe-00}) and it is nowadays well known that equation \eqref{eqnodal} possesses infinitely many periodic solutions (both harmonic and subharmonic)
with a large number of zeros in the intervals where the weight function is positive, as well as globally defined solutions with chaotic-like oscillatory behavior.

On the other hand, the existence of \textit{positive} periodic solutions to equations like \eqref{eqmod}, even if already raised by Butler as an open problem in \cite[p.~477]{Bu-76},
was investigated only more recently.

In this regard, a first crucial observation is that a mean value condition on $a(t)$ turns out to be necessary for the existence of
positive $kT$-periodic solutions (with $k \geq 1$ an integer number); indeed, dividing equation \eqref{eqmod} by $u(t)^{p}$ and  integrating on $\mathopen{[}0,kT\mathclose{]}$, one readily obtains
\begin{equation*}
\int_{0}^{kT} a(t)~\!dt = - p \int_{0}^{kT} \biggl{(}\dfrac{u'(t)}{u(t)^{p}}\biggr{)}^{2} u(t)^{p-1}~\!dt,
\end{equation*}
so that (recalling that $a(t)$ is $T$-periodic)  
\begin{equation}\label{meancon}
\int_{0}^{T} a(t)~\!dt < 0.
\end{equation}
This fact was first observed by Bandle, Pozio and Tesei in \cite{BaPoTe-88}, showing that the condition $\int_\Omega a(x)~\!dx < 0$
is actually necessary and sufficient for the existence of a positive solution to the Neumann problem associated with the elliptic partial differential equation 
\begin{equation*}
\Delta u + a(x) u^{p} = 0, \qquad u \in \Omega \subseteq \mathbb{R}^{N},
\end{equation*}
in the sublinear case $0 < p < 1$ (notice, indeed, that the computation leading to \eqref{meancon} is valid for any $p > 0$,
both for periodic and Neumann boundary conditions, and possibly in a PDE setting). A similar result was then proved in the superlinear case $p>1$ in \cite{AlTa-93,BeCaDoNi-95},
using arguments from critical point theory.

To the best of our knowledge, periodic boundary conditions were explicitly taken into account only in the very recent paper \cite{FeZa-15ade}.
Therein, a topological approach based on Mawhin's coincidence degree was introduced to prove that the mean value condition \eqref{meancon} guarantees
the existence of a positive $T$-periodic solution for a large class of indefinite equations including \eqref{eqmod}. In such a way, a first affirmative answer to Butler's question can be given.

On one hand, this result seems to be optimal (in the sense that no more than one $T$-periodic solution can be expected for a general weight function with negative mean value);
on the other hand, however, it is known that positive solutions to 
\eqref{eqmod} can exhibit complex behavior for special choices of the weight function $a(t)$. More precisely, it was shown in \cite{BaBoVe-15,FeZa-pp2015}
(on the lines of previous results dealing with the Dirichlet and Neumann problems \cite{BoGoHa-05,Bo-11jmaa,FeZa-15jde,GaHaZa-03mod,GiGo-09}) that,
whenever $a(t)$ has large negative part (that is, $a(t) = q^{+}(t) - \mu q^{-}(t)$ with $\mu \gg 0$),
equation \eqref{eqmod} has infinitely many positive subharmonic solutions, as well as globally defined positive solutions with chaotic-like multibumb behavior.

It appears therefore a quite natural question if the sharp mean value condition \eqref{meancon} - besides implying the existence of a positive $T$-periodic solution
to \eqref{eqmod} - also guarantees the existence of positive subharmonic solutions.
Quite unexpectedly, as a corollary of our main results, we are able to show that the answer is always affirmative, thus providing a further contribution to Butler's problem.

\begin{theorem}\label{th-intro}
Assume that $a \colon \mathbb{R} \to \mathbb{R}$ is a sign-changing continuous and $T$-periodic function, having a finite number of zeros in $\mathopen{[}0,T\mathclose{[}$ and
satisfying the mean value condition \eqref{meancon}.
Then, equation \eqref{eqmod} has a positive $T$-periodic solution, as well as positive subharmonic solutions of order $k$, for any large integer number $k$. 
\end{theorem}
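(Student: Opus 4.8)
Two things are asserted in Theorem~\ref{th-intro}, and I would establish them separately. The existence of a positive $T$-periodic solution $\varphi$ of \eqref{eqmod} is essentially the content of the coincidence-degree result of \cite{FeZa-15ade}, where the mean value condition \eqref{meancon} is the (sharp) hypothesis; I would quote it, recovering if needed the relevant a priori bounds in the present, slightly more general, framework. The real point is to produce positive subharmonics, so I would fix one such $\varphi$ and look for $kT$-periodic solutions of \eqref{eqmod} winding around $\varphi$ in the phase plane.

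To that end, rewrite \eqref{eqmod} as the planar Hamiltonian system $x'=y$, $y'=-a(t)x^{p}$ in the half-plane $\{x>0\}$, and let $\Phi$ be its Poincar\'e (time-$T$) map, with iterates $\Phi^{k}$; these maps are area-preserving on their domains, and $P_{0}:=(\varphi(0),\varphi'(0))$ is an interior fixed point of $\Phi$. The strategy is: for every large $k$, find a topological annulus $\mathcal{A}_{k}$ around $P_{0}$, entirely contained in the set of initial data whose solution stays in $\{x>0\}$ on $[0,kT]$, on which $\Phi^{k}$ is a twist map — the angular coordinate about $P_{0}$ winding by less than $2\pi m$ along one boundary of $\mathcal{A}_{k}$ and by more than $2\pi m$ along the other, for some integer $m$ with $\gcd(m,k)=1$. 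The Poincar\'e--Birkhoff theorem then produces at least two fixed points of $\Phi^{k}$ in $\mathcal{A}_{k}$ with rotation number exactly $m/k$ about $P_{0}$; as this fraction is in lowest terms, such a point has minimal period $k$ under $\Phi$, hence is not fixed by $\Phi^{j}$ for any $1\le j<k$, and so gives a genuine subharmonic solution of order $k$, which is positive because its orbit lies in $\{x>0\}$ by construction. The choice of $m$ is then merely a matter of density of fractions with prescribed denominator, once the range of rotation numbers realized by $\Phi$ on such annuli is known to be a nondegenerate interval.

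Reducing everything to that last point, the heart of the proof — and, I expect, the main obstacle — is the twist estimate, i.e.\ the control of two rotation numbers about $P_{0}$ together with the positivity of the whole annulus on $[0,kT]$. For the inner estimate I would linearize \eqref{eqmod} at $\varphi$, obtaining the Hill equation $w''+p\,a(t)\varphi(t)^{p-1}w=0$; since $\varphi>0$ solves $-\varphi''=\bigl(a(t)\varphi^{p-1}\bigr)\varphi$, zero is the principal $T$-periodic eigenvalue of $-\tfrac{d^{2}}{dt^{2}}-a(t)\varphi^{p-1}$, and evaluating the Rayleigh quotient of the linearized operator $-\tfrac{d^{2}}{dt^{2}}-p\,a(t)\varphi^{p-1}$ at $\varphi$ one finds, using $p>1$ and $\int_{0}^{T}\varphi'^{2}>0$, that its principal $T$-periodic eigenvalue is strictly negative; equivalently, $\varphi$ has positive Morse index, which forces the linearized flow — hence $\Phi$ on small circles around $P_{0}$ — to have a strictly positive, computable rotation number. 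For the outer estimate I would study positive solutions that stay away from $\varphi$ while remaining positive up to time $kT$, exploiting the indefinite superlinear structure of $a(t)x^{p}$ (concavity where $a>0$, convexity where $a<0$) to pin their rotation to a different value, typically by counting half-windings across the intervals where $a$ changes sign and showing that the extremal, nearly vanishing, positive solutions wind by an integer amount distinct from the linearized one. The delicate part is that enlarging the annulus so as to create the twist competes with keeping it inside the shrinking time-$kT$ positivity set, so $\mathcal{A}_{k}$ must be built with care; moreover, degenerate situations — $\varphi$ a degenerate $T$-periodic solution (integer linearized rotation number), or $P_{0}$ non-hyperbolic with no local twist — may require a separate treatment, e.g.\ a limiting/approximation argument or a version of the Poincar\'e--Birkhoff theorem adapted to weak twist.
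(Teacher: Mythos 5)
Your overall strategy coincides with the paper's: a positive $T$-periodic solution $u^{*}$ via coincidence degree as in \cite{FeZa-15ade}, a Morse index computation showing $\lambda_{0}(p\,a(t)u^{*}(t)^{p-1})<0$, and the Poincar\'e--Birkhoff theorem applied to the $kT$-Poincar\'e map around $u^{*}$. Your Rayleigh-quotient argument for the Morse index is correct and is in fact a clean specialization, to $g(u)=u^{p}$, of the Brown--Hess trick \cite{BrHe-90} used in Lemma~\ref{lem-morse}: testing the quadratic form of $-\tfrac{d^{2}}{dt^{2}}-p\,a\,\varphi^{p-1}$ with $\varphi$ gives $(1-p)\int_{0}^{T}\varphi'^{2}<0$. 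Also, your worry about degenerate fixed points is unfounded: the inner twist does not need nondegeneracy of $u^{*}$, only the strict positivity of the rotation number of the linearization (equivalent to $\lambda_{0}<0$), since the winding over $[0,kT]$ then grows without bound as $k\to\infty$; this is exactly how condition \eqref{hpmorse} is exploited in Proposition~\ref{propsub}.

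The genuine gap is the one you yourself flag and leave unresolved: the construction of the annulus $\mathcal{A}_{k}$ and the outer twist estimate. Working directly with the flow of $x'=y$, $y'=-a(t)x^{p}$ inside $\{x>0\}$ runs into two obstructions simultaneously. First, the equation is superlinear and indefinite, so solutions with large data need not be globally defined (they can blow up in finite time, cf.\ \cite{BuGr-71}); the time-$kT$ map is not defined on large circles, and ``extremal, nearly vanishing, positive solutions'' near the boundary of the positivity set typically leave $\{x>0\}$ well before time $kT$, so there is no reason the domain of $\Phi^{k}$ intersected with the positivity set contains an annulus around $P_{0}$ whose outer boundary carries a controlled (small) winding. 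Your proposed sign-counting across the intervals where $a$ changes sign does not produce such a boundary curve. The paper circumvents both problems by \emph{not} working with the original vector field: it truncates the nonlinearity below the lower solution $\alpha\equiv 0$ (so the modified equation is defined and has bounded right-hand side for $u\le 0$) and extends it \emph{linearly} above an a priori bound $\rho$ (Lemma~\ref{lem-4.1}, Section~\ref{section-4.1}), which restores global continuability; Poincar\'e--Birkhoff is then applied in the $(v,v')=(u-u^{*},u'-u^{*}{}')$ plane to the modified equation, the outer slow-rotation estimate being obtained by the modified polar coordinates of \eqref{stima}. Only a posteriori does one recover solutions of \eqref{eqmod}: a maximum-principle argument (property $(\bigstar)$ in the proof of Proposition~\ref{propsub}) shows that the sign-changing $kT$-periodic solutions produced are $\ge\alpha-u^{*}$, hence positive, and Lemma~\ref{lem-4.1} shows they stay below $\rho$, hence solve the untruncated equation. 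Without some such truncation-plus-a-priori-bound device, your outer estimate does not go through as stated.
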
 

Actually, the assumptions on the weight function $a(t)$ can be considerably weakened and the conclusion about the number of subharmonic solutions obtained
can be made much more precise. We refer to Section~\ref{section-3} for more general statements.

Let us emphasize that investigating the existence of subharmonic solutions for time-periodic ODEs is often a quite delicate issue,
the more difficult point being the proof of the minimality of the period. In Theorem~\ref{th-intro}, $kT$-periodic solutions $u_{k}(t)$ are found 
(for $k$ large enough) oscillating
around a $T$-periodic solution $u^{*}(t)$ and a precise information on the number of zeros of $u_{k}(t) - u^{*}(t)$ is the key point in showing that $kT$ is the minimal period of $u_{k}(t)$.
This approach, based on the celebrated Poincar\'{e}-Birkhoff fixed point theorem, was introduced (and then applied to Ambrosetti-Prodi type periodic problems)
in the paper \cite{BoZa-13}, to which we also refer for a quite complete bibliography about the theme of subharmonic solutions.
It is worth noticing, however, that the application to equation \eqref{eqmod} of the method described in \cite{BoZa-13} is not straightforward.
First, due to the superlinear character of the nonlinearity, we cannot guarantee (as needed for the application of dynamical systems techniques) the global continuability of
solutions to \eqref{eqmod} (see \cite{BuGr-71}) and some careful a-priori bounds have to be performed.
Second, due to the indefinite character of the equation, it seems impossible to perform explicit estimates on the solutions in order to prove
the needed twist-condition of the Poincar\'{e}-Birkhoff theorem. To overcome this difficulty, we first use an idea from \cite{BoOrZa-14}
to develop an abstract variant of the main result in \cite{BoZa-13}, replacing an explicit estimate on the positive $T$-periodic solution $u^{*}(t)$
with an information about its Morse index. Using a clever trick by Brown and Hess (cf.~\cite{BrHe-90}), such an information is then easily achieved.
We emphasize this simple property here, since it is the crucial point for our arguments: \textit{any positive $T$-periodic solution of \eqref{eqmod} has non-zero Morse index}.

Let us finally recall that variational methods can be an alternative tool for the study of subharmonic solutions.
In this case, information about the minimality of the period can be often achieved with careful level estimates (see, among others, \cite{FoRa-94,SeTaTe-00}).
Maybe this technique can be successfully applied also to the superlinear indefinite equation \eqref{eqmod};
however, it has to be noticed that usually results obtained via a symplectic approach (namely, using the Poincar\'{e}-Birkhoff theorem)
give sharper information (see \cite{BoOrZa-14,BoZa-13}).

\medskip

The plan of the paper is the following. In Section~\ref{section-2} we present, on the lines of \cite{BoOrZa-14,BoZa-13}, an auxiliary result ensuring,
for a quite broad class of nonlinearities, the existence of subharmonic solutions oscillating around a $T$-periodic solution with non-zero Morse index.
In Section~\ref{section-3} we state our main results, dealing with equations of the type $u'' + a(t)g(u) = 0$ with $a(t)$ satisfying \eqref{meancon} and $g(u)$
defined on a (possibly bounded) interval of the type $\mathopen{[}0,d\mathclose{[}$; roughly speaking, we have that the existence of positive subharmonic solutions
(oscillating around a positive $T$-periodic solution) is always guaranteed whenever $g(u)$ is superlinear at zero and strictly convex, with $g(u)/u$ large enough near $u = d$.
Applications are given to equations superlinear at infinity (thus generalizing Theorem~\ref{th-intro}), to equations with a singularity as well as
to parameter-dependent equations.
In Section~\ref{section-4} we give the proof of these results; in more detail, we first prove (using a degree approach similarly as in \cite{FeZa-15ade},
together with the trick in \cite{BrHe-90}) the existence of a positive $T$-periodic solution with non-zero Morse index and we then apply the results of
Section~\ref{section-2} to obtain the desired positive subharmonic solutions around it.
Section~\ref{section-5} is devoted to some conclusive comments about our investigation.

\section{Morse index, Poincar\'{e}-Birkhoff theorem, subharmonics}\label{section-2}

In this section, we present our auxiliary result for the search of subharmonic solutions to scalar second order ODEs of the type
\begin{equation}\label{eqsub}
u'' + h(t,u) = 0,
\end{equation}
where $h \colon \mathbb{R}\times \mathbb{R} \to\mathbb{R}$ is a function $T$-periodic in the first variable (for some $T>0$).
Motivated by the applications to equations like \eqref{eqmod} with $a \in L^{1}(\mathopen{[}0,T\mathclose{]})$, we set up our result in a Carath\'{e}odory setting. More precisely,  
we assume that the function $h(t,u)$ is measurable in the $t$-variable, continuously differentiable in the $u$-variable
and satisfies the following condition: for any $r > 0$, there exists $m_{r} \in L^{1}(\mathopen{[}0,T\mathclose{]})$ such that
$\vert h(t,u) \vert + \vert \partial_u h(t,u) \vert \leq m_{r}(t)$ for a.e~$t\in\mathopen{[}0,T\mathclose{]}$
and for every $u\in\mathbb{R}$ with $\vert u \vert \leq r$. Of course, in view of this assumption, solutions to \eqref{eqsub} will be meant in the generalized sense,
i.e.~$W^{2,1}_{\textnormal{loc}}$-functions satisfying equation \eqref{eqsub} for a.e.~$t$.

Throughout the paper, we say that $u \in W^{2,1}_{\textnormal{loc}}(\mathbb{R})$ is a \textit{subharmonic solution of order $k$} of \eqref{eqsub} (with $k \geq 1$ an integer number)
if $u(t)$ is a $kT$-periodic solution of \eqref{eqsub} which is not $lT$-periodic for any integer $l = 1,\ldots,k-1$, that is, 
$kT$ is the minimal period of $u(t)$ in the class of the integer multiples of $T$. This is the most natural definition of subharmonic solutions
to equations like \eqref{eqsub}, when just the $T$-periodicity of $h(\cdot,u)$ is assumed; on the lines of \cite{MiTa-88}, if additional conditions on this time dependence are imposed, further information on the minimality of the period can be given (see Remark~\ref{rem-per}). 
Let us also notice that if $u(t)$ is a subharmonic solution of order $k$ of \eqref{eqsub}, the $k-1$ functions $u(\cdot + lT)$, for $l=1,\ldots,k-1$,
are subharmonic solutions of order $k$ of \eqref{eqsub} too; these solutions, though distinct,
have to be considered equivalent from the point of view of the counting of subharmonics. Accordingly, given $u_{1}(t), u_{2}(t)$ subharmonic solutions of order $k$ of \eqref{eqsub},
we say that $u_{1}(t)$ and $u_{2}(t)$ are not in the same \textit{periodicity class} if $u_{1}(\cdot) \not\equiv u_{2}(\cdot + lT)$ for any integer $l = 1,\ldots,k-1$.

Finally, we introduce the following notation. For any $q \in L^{1}(\mathopen{[}0,T\mathclose{]})$, we denote by $\lambda_{0}(q)$ the principal eigenvalue of the linear problem
\begin{equation}\label{eqhill}
v'' + (\lambda + q(t)) v = 0,
\end{equation}
with $T$-periodic boundary conditions. As well known (see, for instance, \cite[ch.~8, Theorem~2.1]{CoLe-55} and \cite[Theorem~2.1]{MaWi-66})
$\lambda_{0}(q)$ exists and is the unique real number such that the
linear equation \eqref{eqhill} admits one-signed $T$-periodic solutions. Recalling that, by definition, the \textit{Morse index} $m(q)$ of the linear equation 
$v'' + q(t)v = 0$ is the number of (strictly) negative $T$-periodic eigenvalues of \eqref{eqhill}, we immediately see that
$\lambda_{0}(q) < 0$ if and only if $m(q) \geq 1$.

\medskip

We are now in position to state the following result.

\begin{proposition}\label{propsub}
Let $h(t,u)$ be as above and assume that the global continuability for the solutions to \eqref{eqsub} is guaranteed. Moreover, suppose that:
\begin{itemize}
\item[$(i)$] there exists a $T$-periodic solution $u^{*}(t)$ of \eqref{eqsub} satisfying
\begin{equation}\label{hpmorse}
\lambda_{0}(\partial_u h(t,u^{*}(t))) < 0;
\end{equation}
\item[$(ii)$] there exists a $T$-periodic function $\alpha \in W^{2,1}_{\textnormal{loc}}(\mathbb{R})$ satisfying
\begin{equation}\label{low}
\alpha''(t) + h(t,\alpha(t)) \geq 0, \quad \text{for a.e. } t \in \mathbb{R},
\end{equation}
and
\begin{equation*}
\alpha(t) < u^{*}(t), \quad \text{for any } t \in \mathbb{R}.
\end{equation*}
\end{itemize}
Then there exists $k^{*} \geq 1$ such that for any integer $k \geq k^{*}$ there exists an integer $m_{k} \geq 1$ such that, for any integer $j$ relatively prime with $k$ and such that
$1 \leq j \leq m_{k}$, equation \eqref{eqsub} has two subharmonic solutions $u_{k,j}^{(1)}(t)$,  $u_{k,j}^{(2)}(t)$ of order $k$ (not belonging to the same periodicity class),
such that, for $i=1,2$, $u_{k,j}^{(i)}(t) - u^{*}(t)$ has exactly $2j$ zeros in the interval $\mathopen{[}0,kT\mathclose{[}$ and
\begin{equation}\label{localiz}
\alpha(t) \leq u_{k,j}^{(i)}(t), \quad \text{for any } t \in \mathbb{R}.
\end{equation}
\end{proposition}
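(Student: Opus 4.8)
The plan is to reduce the statement to an application of the Poincar\'{e}--Birkhoff fixed point theorem in the phase plane, after shifting coordinates so that the known $T$-periodic solution $u^{*}(t)$ becomes the origin. Concretely, I would write any solution of \eqref{eqsub} as $u(t) = u^{*}(t) + v(t)$, so that $v$ solves the equation $v'' + \tilde{h}(t,v) = 0$ with $\tilde{h}(t,v) := h(t,u^{*}(t)+v) - h(t,u^{*}(t))$, a Carath\'{e}odory nonlinearity with $\tilde{h}(t,0) \equiv 0$ and $\partial_v \tilde{h}(t,0) = \partial_u h(t,u^{*}(t))$. Subharmonic solutions of order $k$ of \eqref{eqsub} in the same periodicity class as prescribed correspond to $kT$-periodic solutions $v(t)$ of the shifted equation; the number of zeros of $u_{k,j}^{(i)} - u^{*}$ becomes the number of zeros of $v$, and the lower-solution bound \eqref{localiz} becomes $v(t) \geq \alpha(t) - u^{*}(t) =: \beta(t) < 0$. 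The global continuability hypothesis guarantees that the Poincar\'{e} map of the shifted equation is a well-defined homeomorphism of the plane (area-preserving, since the equation is Hamiltonian), so the whole machinery of rotation numbers applies on the $kT$-time map.

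Next I would set up the twist condition, which is where the two hypotheses enter. For the \emph{inner} boundary (small amplitude), I would use the linearization at $v=0$: the Morse-index condition \eqref{hpmorse}, i.e.\ $\lambda_{0}(\partial_u h(t,u^{*}(t))) < 0$, says precisely that the linear equation $w'' + \partial_u h(t,u^{*}(t))w = 0$ has a $T$-periodic solution with a zero, hence its solutions rotate strictly more than once per period $T$ around the origin; by continuous dependence, small-amplitude solutions of the shifted equation (which are close to solutions of the linearization on $[0,kT]$) have rotation number per period $T$ bounded below by something strictly greater than $1$, so over $k$ periods the total rotation number exceeds $k$. For the \emph{outer} boundary (large amplitude), the lower solution $\alpha$, equivalently $\beta < 0$ with $\beta'' + \tilde{h}(t,\beta) \geq 0$, confines solutions from below: a standard argument (already exploited in \cite{BoZa-13}) shows that solutions starting on a suitably large circle cannot complete even one full turn around the origin in time $T$, because crossing the graph of $\beta$ in the phase plane is obstructed, so their rotation number per period $T$ is $<1$, hence $<k$ over $k$ periods. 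Since the rotation number is monotone-continuous in the initial amplitude along rays, for every $j$ with $1 \leq j \leq m_{k}$ there is an annulus on which the $kT$-map rotates by exactly $2\pi j$ on the inner edge gap and less on the outer; choosing $k^{*}$ so that $k > k^{*}$ forces the inner rotation to exceed $k$, and setting $m_{k}$ to be (roughly) the integer part of the inner rotation number, the Poincar\'{e}--Birkhoff theorem yields, for each admissible $j$, two fixed points of the $kT$-map whose orbits wind exactly $j$ times, i.e.\ $kT$-periodic $v$ with exactly $2j$ zeros in $[0,kT[$.

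The final step is the minimality of the period, which is exactly why the precise zero count matters. If $v$ had period $lT$ for some proper divisor-type $l < k$, then on $[0,kT[$ its number of zeros would be a multiple of the number on $[0,lT[$, hence divisible by $k/\gcd$-type factors incompatible with $2j$ when $\gcd(j,k)=1$; spelling this out, $2j$ zeros on $k$ periods with $j$ coprime to $k$ cannot come from an $lT$-periodic function with $l<k$, so $kT$ is the minimal period among multiples of $T$. The two Poincar\'{e}--Birkhoff fixed points lie on distinct orbits of the map, which translates into the two subharmonics not being in the same periodicity class. The localization $\alpha \leq u_{k,j}^{(i)}$ is inherited from the fact that the Poincar\'{e}--Birkhoff annulus is contained in the region above the graph of $\beta$, which is invariant in the relevant sense.

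The main obstacle I anticipate is establishing a \emph{uniform in $k$} lower bound on the inner rotation number and, relatedly, making the twist estimates genuinely quantitative without explicit control on $u^{*}$: the Morse-index hypothesis only gives rotation $>1$ per period for the linearization, and one must transfer this to nonlinear small-amplitude solutions over arbitrarily long time intervals $[0,kT]$, which requires a careful continuity/compactness argument (this is precisely the ``abstract variant'' of \cite{BoZa-13} alluded to in the introduction, borrowing the idea from \cite{BoOrZa-14}). Controlling the large-amplitude side is comparatively routine given the lower solution, but one still has to handle the failure of global continuability at the level of \eqref{eqmod} — here it is assumed away by hypothesis, so within Proposition~\ref{propsub} the real work is the small-amplitude twist and the bookkeeping that produces the admissible range $1 \leq j \leq m_{k}$ with $m_{k} \to \infty$.
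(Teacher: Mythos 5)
Your overall architecture --- shift coordinates so that $u^{*}$ becomes the origin, pass to polar coordinates, get the inner twist from the linearization at $v=0$, the outer twist from the lower solution, apply Poincar\'{e}--Birkhoff, and count zeros to get minimality of the period --- is indeed the paper's strategy. However, two steps as you describe them do not go through. The more serious one is the outer estimate. You work with the merely shifted nonlinearity $\tilde{h}(t,v)=h(t,u^{*}(t)+v)-h(t,u^{*}(t))$ and argue that large-amplitude solutions cannot complete a full turn because ``crossing the graph of $\beta$ in the phase plane is obstructed''. A lower solution is not a barrier for the phase-plane flow: solutions of a second-order equation can perfectly well cross the graph of $\beta$ in the $(t,u)$-plane, and in the $(v,v')$-plane the region $\{v\geq\beta\}$ is not invariant in any useful sense. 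Moreover, the hypotheses give you no control at all on $\tilde{h}(t,v)$ for $v$ very negative, so no quantitative angular estimate is available in the lower half-plane. The paper's fix is to \emph{truncate first}: one replaces $h(t,u)$ by $h(t,\alpha(t))$ for $u\leq\alpha(t)$ before shifting, so that the resulting $h^{*}(t,v)$ satisfies $|h^{*}(t,v)|\leq b(t)$ with $b\in L^{1}$ for all $v\leq 0$ (see \eqref{stimacar}); this boundedness is what makes large solutions sweep the third and fourth quadrants slowly (via the modified polar coordinates identity \eqref{stima}), giving the outer estimate $(A2)$. The localization \eqref{localiz} is then recovered \emph{a posteriori} by the maximum-principle claim $(\bigstar)$, which shows that every sign-changing $kT$-periodic solution of the truncated equation stays above $\alpha-u^{*}$ and hence solves the original equation --- not by any invariance of the Poincar\'{e}--Birkhoff annulus.

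The second issue is your reading of \eqref{hpmorse}. The condition $\lambda_{0}(q)<0$ does not say that $w''+q(t)w=0$ has a $T$-periodic solution with a zero, nor that its solutions rotate more than once per period $T$; it says that the equation is not disconjugate, equivalently that its Moser rotation number is positive. The correct consequence is that the rotation of the linearized flow over $\mathopen{[}0,kT\mathclose{]}$ tends to $+\infty$ as $k\to\infty$ --- not that it exceeds $k$ full turns. This is exactly why a threshold $k^{*}$ appears in the statement (if the per-period rotation really exceeded one turn, one could take $k^{*}=1$), and $m_{k}$ is essentially the integer part of the linearized rotation over $\mathopen{[}0,kT\mathclose{]}$ divided by $2\pi$. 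The transfer to small-amplitude nonlinear solutions over the long interval $\mathopen{[}0,kT\mathclose{]}$, which you rightly identify as the delicate point, is carried out in the paper by continuous dependence after checking that $h^{*}(\cdot,v)/v\to\partial_{u}h(\cdot,u^{*}(\cdot))$ in $L^{1}(\mathopen{[}0,T\mathclose{]})$ as $v\to 0$. Your minimality-of-the-period argument and the use of the iterated-map corollary of Poincar\'{e}--Birkhoff for the two periodicity classes are fine.
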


Incidentally, we observe that Proposition~\ref{propsub} in particular ensures that equation \eqref{eqsub}
has two subharmonic solutions of order $k$ (not belonging to the same periodicity class) for any large integer $k$ (just, take $j=1$ in the above statement).

\begin{remark}\label{remalpha}
Let us recall that a $T$-periodic function $\alpha \in W^{2,1}_{\textnormal{loc}}(\mathbb{R})$ satisfying \eqref{low}
is a \textit{lower solution} for the $T$-periodic problem associated with \eqref{eqsub} (weaker notions of lower/upper solutions
could be introduced in the Carath\'{e}odory setting, see \cite{DCHa-96}). Clearly, if $\alpha(t)$ is a $T$-periodic solution of \eqref{eqsub}, then $\alpha(t)$ is a $T$-periodic lower solution;
in this case, due to the uniqueness for the Cauchy problems, \eqref{localiz} implies $\alpha(t) < u_{k,j}^{(i)}(t)$ for any $t$.
$\hfill\lhd$
\end{remark}

Proposition~\ref{propsub} is a variant of \cite[Theorem~2.2]{BoZa-13}\footnote{Actually, \cite[Theorem~2.2]{BoZa-13} deals with the symmetric case assuming the existence of
an upper solution $\beta(t) > u^{*}(t)$.}. However, some care is needed in comparing the two results.
First, \cite[Theorem~2.2]{BoZa-13} is stated for $h(t,u)$ smooth; the generalization to the Carath\'{e}odory setting
in this case is not completely straightforward. Second, the assumption corresponding to $(i)$ in \cite[Theorem~2.2]{BoZa-13} reads as
\begin{equation}\label{media}
\int_{0}^{T} \partial_u h(t,u^{*}(t))~\!dt > 0.
\end{equation}
The possibility of replacing this explicit condition with the abstract assumption $\lambda_{0}(\partial_u h(t,u^{*}(t)) < 0$ has been discussed
in \cite[Theorem~2.1]{BoOrZa-14}\footnote{Actually, in \cite[Theorem~2.1]{BoOrZa-14} the case $u^{*}(t) \equiv 0$ is taken into account;
however, the two situations are equivalent via a linear change of variable (see the proof of \cite[Proposition~1]{BoZa-13}).}.
Actually, in that paper the assumption $\rho(\partial_u h(t,u^{*}(t)) > 0$ is used, where 
$\rho(q)$ is the Moser rotation number (see \cite{Mo-81}) of the linear equation $v'' + q(t)v = 0$.
However, it is very well known in the theory of the Hill's equation
(see, for instance, \cite[Proposition~2.1]{GaZh-00}) that $\rho(q) > 0$ if and only if $\lambda_{0}(q) < 0$
(that is, if and only if the equation $v'' + q(t)v = 0$ is \textit{not disconjugate}). 

Related results, yielding the existence and multiplicity of harmonic (i.e.~$T$-periodic) solutions 
according to the interaction of the nonlinearity with (non-principal) eigenvalues, can be found in \cite{MaReTo-14,MaReZa-02,Za-03}.

\medskip

The complete proof of Proposition~\ref{propsub}, based on the Poincar\'{e}-Birkhoff fixed point theorem, is quite long.
For this reason, we provide just a sketch of it, referring to previous papers (in particular, to \cite{BoOrZa-14, BoZa-13}) for the most standard steps.

\begin{proof}[Sketch of the proof of Proposition~\ref{propsub}]
We define the truncated function
\begin{equation*}
\tilde h(t,u) :=
\begin{cases}
\, h(t,\alpha(t)), & \text{if } u \leq \alpha(t); \\
\, h(t,u), & \text{if } u > \alpha(t);
\end{cases}
\end{equation*}
and we set
\begin{equation*}
h^{*}(t,v) := \tilde h(t,u^{*}(t)+v) - h(t,u^{*}(t)), \quad \text{for any } (t,v) \in \mathbb{R}^{2}.
\end{equation*}
Then, we consider the equation
\begin{equation}\label{modif}
v'' + h^{*}(t,v) = 0.
\end{equation}
The following fact is easily proved, using maximum principle-type arguments (see \cite[p.~95]{BoZa-13}). 
\begin{itemize}
\item[$(\bigstar)$] If $v(t)$ is a \textit{sign-changing} $kT$-period solutions of \eqref{modif} (for some integer $k \geq 1$) then $v(t) \geq \alpha(t) - u^{*}(t)$ for any $t \in \mathbb{R}$.
\end{itemize}
Now, we observe that both uniqueness and global continuability for the solutions to the Cauchy problems associated with \eqref{modif} are ensured;
moreover, since $u^{*}(t) > \alpha(t)$, the constant function $v \equiv 0$ is a solution of \eqref{modif}.
We can therefore transform \eqref{modif} into an equivalent first order system in $\mathbb{R}^{2} \setminus \{0\}$,
passing to clockwise polar coordinates $v(t) = r(t) \cos\theta(t)$, $v'(t) = -r(t)\sin\theta(t)$.  

We claim that:
\begin{itemize}
\item[$(A1)$] there exists an integer $k^{*} \geq 1$ such that, for any integer $k \geq k^{*}$, there exist an integer $m_{k} \geq 1$ and $r_{*} > 0$ such that
any solution $(r(t),\theta(t))$ with $r(0) = r_{*}$ satisfies $\theta(kT)-\theta(0) > 2\pi m_{k}$;
\item[$(A2)$] for any integer $k \geq k^{*}$ there exists $R_{*} > r_{*}$ such that any solution $(r(t),\theta(t))$ with $r(0) = R_{*}$ satisfies $\theta(kT)-\theta(0) < 2\pi$.
\end{itemize}
From the above facts, it follows that the Poincar\'{e}-Birkhoff theorem (in the generalized version for non-invariant annuli, see \cite{Di-82,Re-97}) can be applied,
giving, for any $k \geq k^{*}$ and any $1 \leq j \leq m_{k}$, the existence of two $kT$-periodic solutions $v_{k,j}^{(i)}(t)$ ($i=1,2$)
to equation \eqref{modif} having exactly $2j$ zeros on $\mathopen{[}0,kT\mathclose{[}$. Using $(\bigstar)$, it is then immediate to see that $u_{k,j}^{(i)}(t) := v_{k,j}^{(i)}(t) + u^{*}(t)$
is a $kT$-periodic solution of \eqref{eqsub}, satisfying \eqref{localiz} and such that $u_{k,j}^{(i)}(t) - u^{*}(t)$ has exactly $2j$ zeros in the interval $\mathopen{[}0,kT\mathclose{[}$.
The fact that, for $j$ and $k$ relatively prime, $u_{k,j}^{(i)}(t)$ is a subharmonic solution of order $k$ is also easily verified, while $u_{k,j}^{(1)}(t)$ and
$u_{k,j}^{(2)}(t)$ are not in the same periodicity class due to a standard corollary of the Poincar\'{e}-Birkhoff theorem for the iterates of a map.
For more details on the application of this method, we refer to \cite{Bo-11,MaReZa-02,Za-03}.

To conclude the proof, we then have to verify the claims $(A1)$ and $(A2)$. As for the first one, it can be proved exactly as in \cite[Proof of Theorem~2.1]{BoOrZa-14}
(see also \cite[Remark~2.2]{BoOrZa-14}).
The fact that we are working in a Carath\'{e}odory setting does not cause here serious difficulties, since the dominated convergence theorem
easily yields $h^{*}(\cdot,v)/v \to \partial_{u} h(\cdot,u^{*}(\cdot))$ in $L^{1}(\mathopen{[}0,T\mathclose{]})$ for $v \to 0$, and this is enough to use 
continuous dependence arguments as in \cite{BoOrZa-14}.
On the other hand, the proof of $(A2)$ is more delicate (especially when dealing with Carath\'{e}odory functions)
and we prefer to give some more details. We are going to use a trick based on modified polar coordinates, introduced in \cite{Fa-87} (see also \cite{Bo-11}).
More precisely, for any $\mu > 0$, we write
\begin{equation*}
v(t) = \dfrac{r_{\mu}(t)}{\mu} \cos\theta_{\mu}(t), \qquad v'(t) = -r_{\mu}(t)\sin\theta_{\mu}(t);
\end{equation*}
for further convenience we also compute
\begin{equation}\label{thetaprimo}
\theta_{\mu}'(t) = \mu \dfrac{v'(t)^{2} - v(t)v''(t)}{\mu^{2} v(t)^{2} + v'(t)^{2}}.
\end{equation}
The angular coordinates $\theta$ and $\theta_{\mu}$ are in general different.
However, the angular width of any quadrant of the plane is $\pi/2$ also if measured using the angle $\theta_{\mu}$.
As a consequence, recalling \eqref{thetaprimo} we can write the formula
\begin{equation}\label{stima}
\dfrac{1}{4} = \dfrac{\mu}{2\pi}\int_{t_{1}}^{t_{2}}\dfrac{v'(t)^{2} - v(t)v''(t)}{\mu^{2} v(t)^{2} + v'(t)^{2}} ~\!dt,
\end{equation}
valid whenever $t_{1}, t_{2}$ are such that $t_{1} < t_{2}$, $v(t_{1}) = 0 = v'(t_{2})$ (or viceversa) and $(v(t),v'(t))$ belongs to the same quadrant for $t \in \mathopen{[}t_{1},t_{2}\mathclose{]}$.
We stress that \eqref{stima} holds for any $\mu > 0$.

We can now give the proof. Preliminarily, we observe that, using the Cara\-th\'{e}odory condition together with the definition of $h^{*}(t,v)$, 
we can obtain
\begin{equation}\label{stimacar}
\vert h^{*}(t,v) \vert \leq b(t), \quad \text{for a.e. } t \in \mathopen{[}0,T\mathclose{]}, \; \text{for any } v \leq 0,
\end{equation}
where $b \in L^{1}(\mathopen{[}0,T\mathclose{]})$. We now fix an integer $k \geq k^{*}$ and take $\mu > 0$ so small that
\begin{equation}\label{t1}
\dfrac{\mu kT}{2\pi} \leq \dfrac{1}{16}.
\end{equation}
In view of the global continuability of the solutions, there exists $R_{*} > 0$ large enough such that
$r(0) = R_{*}$ implies that 
\begin{equation}\label{t2}
r_{\mu}(t)^{2} = \mu^{2} v(t)^{2} + v'(t)^{2} \geq \biggl{(}\dfrac{8 k \Vert b \Vert_{L^{1}(\mathopen{[}0,T\mathclose{]})}}{\pi}\biggr{)}^{2}, \quad \text{for any } t \in \mathopen{[}0,kT\mathclose{]}.
\end{equation}
At this point, assume by contradiction that $\theta(kT) - \theta(0) \geq 2\pi$ for a solution with 
$r(0) = R_{*}$. Then it is not difficult to see that there exist $t_{1}, t_{2} \in \mathopen{[}0,kT\mathclose{]}$ with $t_{1} < t_{2}$ and such that either
$v(t_{1}) = 0 = v'(t_{2})$ and $(v(t),v'(t))$ belongs to the third quadrant for $t \in \mathopen{[}t_{1},t_{2}\mathclose{]}$ or 
$v'(t_{1}) = 0 = v(t_{2})$ and $(v(t),v'(t))$ belongs to the fourth quadrant for $t \in \mathopen{[}t_{1},t_{2}\mathclose{]}$.
As a consequence, on one hand \eqref{stima} holds true; on the other hand, since $v(t) \leq 0$ for $t \in \mathopen{[}t_{1},t_{2}\mathclose{]}$ we can use 
\eqref{stimacar} so as to obtain
\begin{equation*}
\vert v(t)v''(t) \vert \leq b(t)\vert v(t) \vert, \quad \text{for a.e. } t \in \mathopen{[}t_{1},t_{2}\mathclose{]}.
\end{equation*}
Combining these two facts, we find
\begin{align*}
\dfrac{1}{4} & \leq \dfrac{\mu}{2\pi}\int_{t_{1}}^{t_{2}} \dfrac{v'(t)^{2}}{\mu^{2} v(t)^{2} + v'(t)^{2}}~\!dt 
+ \dfrac{1}{2\pi}\int_{t_{1}}^{t_{2}} \dfrac{b(t) r_{\mu}(t) \vert \cos\theta_{\mu}(t)\vert}{r_{\mu}(t)^{2}}~\!dt \\
& \leq \dfrac{\mu kT}{2\pi} + \dfrac{k \Vert b \Vert_{L^{1}(\mathopen{[}0,T\mathclose{]})}}{2\pi} \dfrac{1}{\min_{t \in \mathopen{[}0,kT\mathclose{]}} r_{\mu}(t)}.
\end{align*}
Using \eqref{t1} and \eqref{t2}, we finally find $\tfrac{1}{4} \leq \tfrac{1}{16} + \tfrac{1}{16} = \tfrac{1}{8}$, a contradiction.
\end{proof}

\begin{remark}\label{confronto}
It is worth noticing that, although related, conditions \eqref{hpmorse} and \eqref{media} are not equivalent. More precisely, given a general weight $q \in L^{1}(\mathopen{[}0,T\mathclose{]})$, 
\begin{equation}\label{conf}
\int_{0}^{T} q(t)~\!dt > 0 \quad \Longrightarrow \quad \lambda_{0}(q) < 0
\end{equation}
as an easy consequence of the variational characterization of the principal eigenvalue (see \cite[Theorem~4.2]{MaWi-66})
\begin{equation}\label{varlambda}
\lambda_{0}(q) = \inf_{v \in H^{1}_{T}}\dfrac{\int_{0}^{T} \bigl{(} v'(t)^{2} - q(t)v(t)^{2}\bigr{)}~\!dt}{\int_{0}^{T} v(t)^{2}~\!dt}
\end{equation}
(just, take $v \equiv 1$ in the above formula; $H^{1}_{T}$ denotes the Sobolev space of $T$-periodic $H^{1}_{\textnormal{loc}}$-functions).
Of course, \eqref{varlambda} also implies that
\begin{equation*}
q(t) \leq 0 \quad \Longrightarrow \quad \lambda_{0}(q) \geq 0
\end{equation*}
but there exist (sign-changing) weights $q(t)$ such that $\int_{0}^{T} q(t)~\!dt \leq 0$ and 
$\lambda_{0}(q) < 0$, showing that the converse of \eqref{conf} is not true. Explicit examples can be constructed, for instance, as in \cite[Remark~3.5]{Bo-11}.
An even more interesting example will be given later (see Remark~\ref{rem-indmorse1}), showing that the possibility of replacing \eqref{media} 
with the weaker assumption \eqref{hpmorse} is crucial for our purposes.
$\hfill\lhd$
\end{remark}

\section{Statement of the main results}\label{section-3}

In this section, we state our main results, dealing with positive solutions to equations of the type
\begin{equation}\label{eqmain}
u'' + a(t)g(u) = 0.
\end{equation}
We always assume that $g \in \mathcal{C}^{2}(I)$, with $I\subseteq\mathbb{R}^{+}:=\mathopen{[}0,+\infty\mathclose{[}$ a right neighborhood of $s = 0$, and satisfies the following conditions:
\begin{equation*}
g(0) = 0
\leqno{(g_{1})}
\end{equation*}
\begin{equation*}
g'(0) = 0
\leqno{(g_{2})}
\end{equation*}
\begin{equation*}
g''(s) > 0, \quad \text{for every } s\in I\setminus\{0\}.
\leqno{(g_{3})}
\end{equation*}
Hence, $g(s)$ is \textit{superlinear at zero} and \textit{strictly convex}. Incidentally, notice that from $(g_{1})$, $(g_{2})$ and $(g_{3})$
it follows that $g(s)$ is strictly increasing; in particular
\begin{equation*}
g(s) > 0, \quad \text{for every } s\in I\setminus\{0\},
\end{equation*}
implying that the only constant solution to \eqref{eqmain} is the trivial one, i.e.~$u \equiv 0$.

As for the weight function, we suppose that $a \colon \mathbb{R} \to \mathbb{R}$ is a $T$-periodic and locally integrable function satisfying the following condition:
\begin{itemize}
\item [$(a_{1})$]
\textit{there exist $m\geq 1$ intervals $I^{+}_{1},\ldots,I^{+}_{m}$, closed and pairwise disjoint in the quotient space $\mathbb{R}/T\mathbb{Z}$, such that
\begin{align*}
& a(t)\geq0, \; \text{ for a.e. } t\in I^{+}_{i}, \quad a(t)\not\equiv0 \; \text{ on } I^{+}_{i}, \quad \text{for } i=1,\ldots,m; \\
& a(t)\leq0, \; \text{ for a.e. } t\in(\mathbb{R}/T\mathbb{Z})\setminus\bigcup_{i=1}^{m}I^{+}_{i}.
\end{align*}
}
\end{itemize}
Moreover, motivated by the discussion in the introduction, we suppose that the mean value condition
\begin{equation*}
\int_{0}^{T} a(t)~\!dt < 0
\leqno{(a_{2})}
\end{equation*}
holds true.

Of course, by a solution to equation \eqref{eqmain} we mean a function $u \in W^{2,1}_{\textnormal{loc}}$, with $u(t) \in I$ for any $t$ and solving 
\eqref{eqmain} for a.e.~$t$. Notice that, since $I \subseteq \mathbb{R}^+$, any solution is a non-negative function; we say that a solution is positive if $u(t) > 0$ for any $t$.  

\medskip

As a first result, we provide a statement generalizing the one given in the introduction for equation \eqref{eqmod}.
More precisely, we show that the existence of positive subharmonic solutions (in the sense clarified at the beginning of Section~\ref{section-2}, see also Remark~\ref{rem-per})
to \eqref{eqmain} is ensured for any function $g(s)$
which satisfies $(g_{1})$, $(g_{2})$, $(g_{3})$ for $I = \mathbb{R}^{+}$ and which is \textit{superlinear at infinity}. Needless to say, this is the case for the model nonlinearity $g(s) = s^{p}$ with $p > 1$.

\begin{theorem}\label{th1}
Let $a \colon \mathbb{R} \to \mathbb{R}$ be a $T$-periodic locally integrable function satisfying $(a_{1})$ and $(a_{2})$.
Let $g \in \mathcal{C}^{2}(\mathbb{R}^{+})$ satisfy $(g_{1})$, $(g_{2})$ and $(g_{3})$, as well as 
\begin{equation*}
\lim_{s \to +\infty}\dfrac{g(s)}{s} = +\infty.
\leqno{(g_{4})}
\end{equation*}
Then, there exists a positive $T$-periodic solution $u^{*}(t)$ of \eqref{eqmain};
moreover, there exists $k^{*} \geq 1$ such that for any integer $k \geq k^{*}$ there exists an integer $m_{k} \geq 1$ such that, for any integer $j$ relatively prime with $k$ and such that
$1 \leq j \leq m_{k}$, equation \eqref{eqmain} has two positive subharmonic solutions $u_{k,j}^{(i)}(t)$ ($i=1,2$) of order $k$ (not belonging to the same periodicity class),
such that $u_{k,j}^{(i)}(t) - u^{*}(t)$ has exactly $2j$ zeros in the interval $\mathopen{[}0,kT\mathclose{[}$.
\end{theorem}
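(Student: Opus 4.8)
The plan is to derive the statement from Proposition~\ref{propsub}, applied to \eqref{eqmain} viewed as \eqref{eqsub} with $h(t,u)=a(t)g(u)$. Two ingredients are then needed: a positive $T$-periodic solution $u^{*}$ with $\lambda_{0}(a(\cdot)g'(u^{*}(\cdot)))<0$, and a $T$-periodic lower solution lying strictly below $u^{*}$. For the latter, by $(g_{1})$ the constant function $\alpha\equiv0$ satisfies $\alpha''+a(t)g(\alpha)=0\geq0$, and $\alpha<u^{*}$ since $u^{*}$ is positive; moreover $\alpha\equiv0$ is itself a solution of \eqref{eqmain}, so that, by Remark~\ref{remalpha}, the localization \eqref{localiz} automatically yields the \emph{strict} positivity $0<u_{k,j}^{(i)}(t)$ of the subharmonics. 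It thus remains to produce $u^{*}$ and to check that its Morse index is nonzero.

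For the existence of $u^{*}$ one appeals to the coincidence-degree approach of \cite{FeZa-15ade}: the mean value condition $(a_{2})$, together with the sign structure $(a_{1})$, the superlinearity at zero $(g_{1})$--$(g_{2})$ and --- crucially --- the superlinearity at infinity $(g_{4})$, which provides the a priori bounds on positive $T$-periodic solutions and makes the relevant degree non-trivial, yields a positive $T$-periodic solution $u^{*}$ of \eqref{eqmain}. For the Morse index one uses the trick of Brown and Hess, cf.~\cite{BrHe-90}. Being a positive solution, $u^{*}$ is itself a positive $T$-periodic solution of the Hill equation $v''+q_{1}(t)v=0$ with $q_{1}(t):=a(t)g(u^{*}(t))/u^{*}(t)$, so $\lambda_{0}(q_{1})=0$ and $u^{*}$ is the principal eigenfunction. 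Testing the Rayleigh quotient \eqref{varlambda} for $\lambda_{0}(a(\cdot)g'(u^{*}(\cdot)))$ with $v=u^{*}$, and using $\int_{0}^{T}(u^{*})'(t)^{2}\,dt=\int_{0}^{T}a(t)g(u^{*}(t))u^{*}(t)\,dt$ (multiply \eqref{eqmain} by $u^{*}$ and integrate by parts), one obtains
\begin{equation*}
\lambda_{0}\bigl(a(\cdot)g'(u^{*}(\cdot))\bigr)\;\leq\;\frac{\int_{0}^{T}a(t)\,u^{*}(t)\,\bigl[g(u^{*}(t))-g'(u^{*}(t))u^{*}(t)\bigr]\,dt}{\int_{0}^{T}u^{*}(t)^{2}\,dt}\;=\;-\,\frac{\int_{0}^{T}a(t)\,u^{*}(t)\,G(u^{*}(t))\,dt}{\int_{0}^{T}u^{*}(t)^{2}\,dt},
\end{equation*}
where $G(s):=g'(s)s-g(s)=\int_{0}^{s}\sigma\,g''(\sigma)\,d\sigma>0$ for $s>0$ by $(g_{3})$. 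For the model nonlinearity $g(s)=s^{p}$ one has $u^{*}(t)G(u^{*}(t))=(p-1)u^{*}(t)^{p+1}$ while $\int_{0}^{T}a(t)u^{*}(t)^{p+1}\,dt=\int_{0}^{T}(u^{*})'(t)^{2}\,dt>0$ ($u^{*}$ is non-constant, $u\equiv0$ being the only constant solution of \eqref{eqmain}), so the right-hand side above is strictly negative; for a general strictly convex $g$ one checks, in the spirit of \cite{BrHe-90}, that $\int_{0}^{T}a(t)u^{*}(t)G(u^{*}(t))\,dt>0$. In all cases $\lambda_{0}(a(\cdot)g'(u^{*}(\cdot)))<0$, i.e.~$u^{*}$ has nonzero Morse index. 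This is precisely the point where replacing the explicit condition \eqref{media} by the abstract one \eqref{hpmorse} is essential: the requirement $\int_{0}^{T}a(t)g'(u^{*}(t))\,dt>0$ may well fail for indefinite weights (cf.~Remark~\ref{confronto}).

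The main difficulty lies in the global continuability hypothesis of Proposition~\ref{propsub}, which \eqref{eqmain} does not satisfy because of $(g_{4})$ (solutions of superlinear equations may blow up in finite time). The way around is a truncation: fix a constant $M>\|u^{*}\|_{\infty}$ and replace $g$ by $\hat g_{M}\in\mathcal{C}^{1}(\mathbb{R}^{+})$ that coincides with $g$ on $[0,M]$ and is extended beyond $M$ by the tangent line of $g$ at $M$; then $\hat g_{M}$ is convex with linear growth, so $u''+a(t)\hat g_{M}(u)=0$ has global continuability, still admits $u^{*}$ as a solution and $\alpha\equiv0$ as a (lower) solution, and the Morse index computation above is unchanged since $\hat g_{M}=g$ near $u^{*}$. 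Proposition~\ref{propsub} then yields, for $k\geq k^{*}$ and $j$ coprime with $k$, $1\leq j\leq m_{k}$, the solutions $u_{k,j}^{(i)}$ with exactly $2j$ zeros of $u_{k,j}^{(i)}-u^{*}$ and $0<u_{k,j}^{(i)}$; and here is where the real work is needed: one must establish uniform a priori bounds showing that, for $M$ chosen large enough in terms of $k$ (and $m_{k}$), these solutions stay below $M$ --- with no dependence on the outer radius $R_{*}$ produced by claim $(A2)$ of Proposition~\ref{propsub} --- so that they actually solve \eqref{eqmain}. I expect this to be the main obstacle: controlling the sup-norm of the oscillating subharmonics by $k$ and by the number of nodes of $u_{k,j}^{(i)}-u^{*}$ alone is delicate for indefinite superlinear equations and rests on a careful shooting-type analysis over the positivity intervals of $a$, where the structure $(a_{1})$ enters in an essential way. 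A second, milder, point is the verification that $\int_{0}^{T}a\,u^{*}G(u^{*})>0$ for a general strictly convex $g$, beyond the homogeneous case treated above.
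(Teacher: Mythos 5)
Your overall architecture (truncate $g$ at linear growth, get $u^{*}$ by coincidence degree, verify non-zero Morse index, apply Proposition~\ref{propsub} with $\alpha\equiv 0$, then show the subharmonics stay in the truncation region) is the same as the paper's. However, there are two genuine gaps. The first is the Morse index step for general $g$. Your argument tests the Rayleigh quotient \eqref{varlambda} with $v=u^{*}$ and reduces the claim to $\int_{0}^{T}a\,u^{*}G(u^{*})\,dt>0$ with $G(s)=g'(s)s-g(s)$. Writing $a=-u''/g(u)$ and integrating by parts, this is equivalent to $\int_{0}^{T}(u^{*}{}')^{2}\,\Phi'(u^{*})\,dt>0$ with $\Phi(s)=sG(s)/g(s)$, and one computes $\Phi'(s)=\bigl(g''(s)s^{2}g(s)-G(s)^{2}\bigr)/g(s)^{2}$, which is \emph{not} nonnegative for every strictly convex $g$ with $g(0)=g'(0)=0$ (take $g''$ concentrated near $0$ and small afterwards). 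So this is not a ``milder point'' to be checked ``in the spirit of \cite{BrHe-90}'': your route works for the homogeneous model $g(s)=s^{p}$ but does not extend to the generality of Theorem~\ref{th1}. The paper's Lemma in Section~\ref{section-4.3} uses a different pairing: multiply the equation by $f'(u)v$ and the eigenvalue equation by $f(u)$, where $v>0$ is the \emph{principal eigenfunction} (not $u^{*}$), subtract and integrate by parts twice to get $\lambda_{0}\int_{0}^{T}vf(u)=-\int_{0}^{T}vf''(u)(u')^{2}<0$, which is valid for any positive strictly convex $f$ with no extra integral condition.

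The second gap is the a priori bound, which you correctly flag as the crux but leave open and, more importantly, set up in a way that is circular: you propose to choose the truncation level $M$ ``large enough in terms of $k$ (and $m_{k}$)'', but $k^{*}$ and $m_{k}$ are produced by Proposition~\ref{propsub} applied to the truncated equation, hence they depend on $M$. The paper breaks this circle with Lemma~\ref{lem-4.1}: the bound $\max u<\rho$ holds for \emph{every} $kT$-periodic solution, \emph{uniformly in $k$}, with constants $M_{1},M_{2}$ depending only on $a$. The mechanism is not a shooting analysis but an elementary convexity/concavity argument localized on one positivity interval $I^{+}_{i}$ of $a$: if $\max u=\rho^{*}\geq\rho$, convexity of $u$ where $a\leq 0$ forces the maximum into some $I^{+}_{i}$, concavity there gives $u\geq M_{1}\rho^{*}$ and $|u'|\leq\rho^{*}/\varepsilon$ on the inner part of $I^{+}_{i}$, and integrating the equation over that subinterval yields $\hat f(M_{1}\rho^{*})/(M_{1}\rho^{*})\leq M_{2}$, contradicting $(f_{4})$ (which for Theorem~\ref{th1} is arranged by choosing $\rho$ large via $(g_{4})$, once and for all, before any mention of $k$). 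With this uniform bound the subharmonics produced by the Poincar\'e--Birkhoff argument automatically lie below $\rho$ regardless of the outer radius $R_{*}$, which is exactly the point you anticipated but did not resolve.
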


In our second result, we deal with the case $I = \mathopen{[}0,\delta\mathclose{[}$, with $\delta > 0$ finite, assuming a \textit{singular} behavior for $g(s)$ when $s \to \delta^{-}$.

\begin{theorem}\label{th2}
Let $a \colon \mathbb{R} \to \mathbb{R}$ be a $T$-periodic locally integrable function satisfying $(a_{1})$ and $(a_{2})$.
Let $g \in \mathcal{C}^{2}(\mathopen{[}0,\delta\mathclose{[})$ (for some $\delta > 0$ finite) satisfy 
$(g_{1})$, $(g_{2})$ and $(g_{3})$, as well as
\begin{equation*}
\lim_{s \to \delta^-}g(s) = +\infty.
\leqno{(g_{4}')}
\end{equation*}
Then, there exists a positive $T$-periodic solution $u^{*}(t)$ of \eqref{eqmain};
moreover, there exists $k^{*} \geq 1$ such that for any integer $k \geq k^{*}$ there exists an integer $m_{k} \geq 1$ such that, for any integer $j$ relatively prime with $k$ and such that
$1 \leq j \leq m_{k}$, equation \eqref{eqmain} has two positive subharmonic solutions $u_{k,j}^{(i)}(t)$ ($i=1,2$) of order $k$ (not belonging to the same periodicity class),
such that $u_{k,j}^{(i)}(t) - u^{*}(t)$ has exactly $2j$ zeros in the interval $\mathopen{[}0,kT\mathclose{[}$.
\end{theorem}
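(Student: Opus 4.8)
The plan is to deduce Theorem~\ref{th2} as an application of Proposition~\ref{propsub}, exactly in the spirit of Theorem~\ref{th1}, with the role of the growth condition $(g_4)$ at infinity replaced by the singular condition $(g_4')$ at $s=\delta^-$. The two ingredients needed from Section~\ref{section-2} are: (a) a positive $T$-periodic solution $u^*(t)$ of \eqref{eqmain} with non-zero Morse index, i.e.\ satisfying $\lambda_0(\partial_u h(t,u^*(t)))<0$ where $h(t,u)=a(t)g(u)$ so that $\partial_u h(t,u^*(t))=a(t)g'(u^*(t))$; and (b) a $T$-periodic lower solution $\alpha(t)$ with $\alpha(t)<u^*(t)$ for all $t$. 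Once these are in hand, Proposition~\ref{propsub} produces the subharmonic solutions $u_{k,j}^{(i)}(t)\ge\alpha(t)$ with the stated nodal count $2j$ for $u_{k,j}^{(i)}-u^*$; positivity of the subharmonics then follows from $(\bigstar)$ together with the choice of $\alpha$, and the counting/periodicity-class conclusions are a direct transcription from Proposition~\ref{propsub}. The lower solution is the easy part: since $g(0)=0$, one checks that $\alpha\equiv 0$ (or a small negative constant, using $(a_1)$ and $g$ extended suitably for $u\le 0$) satisfies $\alpha''+a(t)g(\alpha)\ge 0$, and $u^*>0$ gives $\alpha<u^*$; a small shift handles the strict inequality and the localization \eqref{localiz} upgrades to strict positivity as in Remark~\ref{remalpha}.

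The substantive step is constructing $u^*(t)$ and establishing its non-zero Morse index. For existence I would invoke the coincidence-degree machinery of \cite{FeZa-15ade} (as announced in the introduction and developed in Section~\ref{section-4}), which under $(a_1)$ and the sharp mean value condition $(a_2)$ yields a positive $T$-periodic solution of $u''+a(t)g(u)=0$ for a broad class of $g$; the singular hypothesis $(g_4')$ actually \emph{helps} here, since it provides an automatic a priori bound $u^*(t)<\delta$ (solutions cannot approach the singularity in finite time without blowing up the equation — one uses an energy/integral argument against $(g_4')$ to keep $u^*$ bounded away from $\delta$), so the degree argument runs on the bounded interval $I=[0,\delta[$ without needing the global growth control that $(g_4)$ supplies in Theorem~\ref{th1}. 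For the Morse index I would use the Brown--Hess trick \cite{BrHe-90} exactly as flagged in the introduction: dividing \eqref{eqmain} by $u^*$, integrating over $[0,T]$, and comparing with the variational characterization \eqref{varlambda}. Concretely, testing $\lambda_0(a(t)g'(u^*(t)))$ against $v=u^*$ in \eqref{varlambda}, one gets
\begin{equation*}
\lambda_0(a(t)g'(u^*)) \le \frac{\int_0^T\bigl((u^{*\prime})^2 - a(t)g'(u^*)(u^*)^2\bigr)\,dt}{\int_0^T (u^*)^2\,dt},
\end{equation*}
and using $u^{*\prime\prime}=-a(t)g(u^*)$ after an integration by parts the numerator becomes $\int_0^T a(t)\bigl(g(u^*)u^* - g'(u^*)(u^*)^2\bigr)\,dt = -\int_0^T a(t)(u^*)^2\bigl(g'(u^*)-g(u^*)/u^*\bigr)\,dt$. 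Here strict convexity $(g_3)$ with $(g_1)$--$(g_2)$ forces $g'(s)>g(s)/s>0$ for $s>0$, so the integrand $a(t)(u^*)^2(g'(u^*)-g(u^*)/u^*)$ has the same sign as $a(t)$; since $a$ is not identically zero and changes sign but the bracket is strictly positive, one needs a slightly finer argument — the cleanest is to note (as in \cite{BrHe-90}) that $u^*$ is a positive solution of the linear equation $v''+a(t)g(u^*)/u^*\cdot v=0$, hence $\lambda_0(a(t)g(u^*)/u^*)=0$, and then \eqref{varlambda} combined with the strict inequality $g'(u^*(t))>g(u^*(t))/u^*(t)$ on a set of positive measure where $a>0$, together with monotonicity of $\lambda_0$ in the weight, gives $\lambda_0(a(t)g'(u^*(t)))<\lambda_0(a(t)g(u^*(t))/u^*(t))=0$.

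I expect the main obstacle to be precisely this Morse index computation in the Carath\'eodory/sign-changing setting: one must ensure the comparison $g'(u^*)\ge g(u^*)/u^*$ with \emph{strict} inequality holding on a subset of $\{a>0\}$ of positive measure (it could a priori degenerate if $u^*$ were constant there, but $u^*$ solves a non-autonomous equation with $a\not\equiv 0$, so it is non-constant), and then apply the strict monotonicity of the principal eigenvalue $\lambda_0(\cdot)$ with respect to the $L^1$ ordering of weights on a set of positive measure. A secondary technical point is the a priori confinement $u^*(t)<\delta$: one must rule out that the periodic solution furnished by the degree argument touches or exceeds the singularity, which is handled by observing that near $s=\delta^-$ the term $a(t)g(u)$ is large in modulus on $\{a>0\}$, producing strong concavity that prevents $u$ from reaching $\delta$, while on $\{a<0\}$ the solution is convex and cannot cross $\delta$ from below without having done so on a positivity interval first — a standard shooting/comparison argument, but one that needs to be carried out with the integrability of $a$ in mind. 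Everything else — the lower solution, positivity of subharmonics via $(\bigstar)$, the nodal count, minimality of the period for $\gcd(j,k)=1$, and the distinct periodicity classes — transfers verbatim from Proposition~\ref{propsub} and the proof of Theorem~\ref{th1}.
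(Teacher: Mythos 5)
Your overall architecture matches the paper's: reduce to Proposition~\ref{propsub} with $\alpha\equiv 0$, obtain $u^{*}$ by coincidence degree, and supply hypothesis \eqref{hpmorse} from convexity. But the Morse index step --- which is the crux of the whole paper --- is carried out incorrectly. Your ``cleanest'' argument compares $\lambda_{0}(a(t)g'(u^{*}(t)))$ with $\lambda_{0}(a(t)g(u^{*}(t))/u^{*}(t))=0$ by ``monotonicity of $\lambda_{0}$ in the weight''. That monotonicity ($q_{1}\geq q_{0}$ a.e.\ implies $\lambda_{0}(q_{1})\leq\lambda_{0}(q_{0})$, strictly if the inequality is strict on a set of positive measure) requires the two weights to be \emph{ordered}. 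Here
\begin{equation*}
a(t)g'(u^{*}(t))-a(t)\dfrac{g(u^{*}(t))}{u^{*}(t)}=a(t)\Bigl(g'(u^{*}(t))-\dfrac{g(u^{*}(t))}{u^{*}(t)}\Bigr),
\end{equation*}
and since the bracket is strictly positive, this difference has the \emph{same sign as $a(t)$}: it is negative wherever $a<0$. The weights are therefore not comparable and no conclusion follows; your own first computation (testing \eqref{varlambda} with $v=u^{*}$) already exhibits the obstruction, since it produces the sign-indefinite quantity $-\int_{0}^{T}a(t)(u^{*})^{2}\bigl(g'(u^{*})-g(u^{*})/u^{*}\bigr)\,dt$. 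The paper's Lemma~\ref{lem-morse} succeeds precisely because the Brown--Hess trick is \emph{not} a weight comparison: one multiplies $u''+af(u)=0$ by $f'(u)v$, where $v>0$ is the principal eigenfunction of the weight $af'(u)$ itself, multiplies the eigenvalue equation by $f(u)$, subtracts, and the indefinite factor $a(t)$ cancels; two integrations by parts then give the sign-definite identity $\lambda_{0}\int_{0}^{T}vf(u)\,dt=-\int_{0}^{T}vf''(u)(u')^{2}\,dt<0$. This computation is the missing idea, and it cannot be replaced by a Rayleigh-quotient or comparison argument of the kind you propose.

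A secondary gap: Proposition~\ref{propsub} requires $h(t,\cdot)$ to be defined on all of $\mathbb{R}$ with Carath\'{e}odory bounds, plus global continuability, whereas in the singular case $a(t)g(u)$ is not even defined for $u\geq\delta$. One must therefore truncate at some $\rho<\delta$ and extend linearly, and then one needs an a priori bound below $\rho$ not only for $u^{*}$ but for \emph{every} $kT$-periodic solution, uniformly in $k$ --- otherwise the solutions produced by the Poincar\'{e}--Birkhoff theorem might solve only the truncated equation. This is exactly why the paper's Lemma~\ref{lem-4.1} is stated for all $k\geq1$ (and it also feeds the degree computation $D_{L}(L-N,B(0,\rho))=0$ via the homotopy with the parameter $\nu$). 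Your heuristic that ``strong concavity prevents $u$ from reaching $\delta$'' gestures at this but must be made quantitative, which is where the constants $M_{1},M_{2}$ and hypothesis $(f_{4})$ enter; your proposal addresses the confinement only for the harmonic solution $u^{*}$.
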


We mention that singular equations with indefinite weight were considered in \cite{BrTo-10,Ur-pp2015,Ur-16}. More precisely, these papers deal with equations like
$u'' + a(t)/u^{\sigma} = 0$, where $\sigma > 0$. Our setting is different and Theorem~\ref{th2} applies for instance to the equation 
\begin{equation}\label{eqsing}
u'' + a(t)\biggl{(}\dfrac{u^{\gamma}}{1-u^{\sigma}}\biggr{)} = 0,
\end{equation}
for $\gamma > 1$ and $\sigma \geq 1$. To the best of our knowledge, even the mere existence of a positive $T$-periodic solution to \eqref{eqsing}
is a fact which has never been noticed.

Finally, we give a purely \textit{local result}. More precisely, we just assume $(g_{1})$, $(g_{2})$ and $(g_{3})$ in a bounded interval $I = \mathopen{[}0,\rho\mathclose{]}$, with $\rho > 0$ finite;
on the other hand, we deal with an equation depending on a real parameter and we manage to obtain the result by varying it.

\begin{theorem}\label{th3}
Let $a \colon \mathbb{R} \to \mathbb{R}$ be a $T$-periodic locally integrable function satisfying $(a_{1})$ and $(a_{2})$.
Let $g \in \mathcal{C}^{2}(\mathopen{[}0,\rho\mathclose{]})$ (for some $\rho > 0$) satisfy 
$(g_{1})$, $(g_{2})$ and $(g_{3})$.
Then, there exists $\lambda^{*} > 0$ such that for any $\lambda > \lambda^{*}$ 
there exists a positive $T$-periodic solution $u^{*}(t)$ of the parameter-dependent equation
\begin{equation}\label{eqpar}
u'' + \lambda a(t)g(u) = 0
\end{equation}
satisfying $\max_{t \in \mathbb{R}}u^{*}(t) < \rho$. Moreover, 
there exists $k^{*} \geq 1$ such that for any integer $k \geq k^{*}$ there exists an integer $m_{k} \geq 1$ such that, for any integer $j$ relatively prime with $k$ and such that
$1 \leq j \leq m_{k}$, equation \eqref{eqpar} has two positive subharmonic solutions $u_{k,j}^{(i)}(t)$ ($i=1,2$) of order $k$ (not belonging to the same periodicity class),
with $\max_{t \in \mathbb{R}}u_{k,j}^{(i)}(t) < \rho$ and such that $u_{k,j}^{(i)}(t) - u^{*}(t)$ has exactly $2j$ zeros in the interval $\mathopen{[}0,kT\mathclose{[}$.
\end{theorem}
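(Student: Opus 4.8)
The plan is to obtain Theorem~\ref{th3} as a consequence of Proposition~\ref{propsub}, exactly as one expects Theorems~\ref{th1} and~\ref{th2} to be proved; the only difference is that here one exploits the parameter $\lambda$ to confine the solutions inside the interval $\mathopen{[}0,\rho\mathclose{]}$ where $g$ is defined and satisfies $(g_{1})$--$(g_{3})$. First I would extend $g$ to a function $\tilde g \in \mathcal{C}^{2}(\mathbb{R}^{+})$ satisfying $(g_{1})$, $(g_{2})$, $(g_{3})$ on all of $\mathbb{R}^{+}$ (for instance, prolonging $g$ past $\rho$ by a convex function, possibly even superlinear, or simply affinely with positive slope $g'(\rho)$ if one only needs $(g_{3})$ in the weak sense; in any case the precise extension is irrelevant since a priori bounds will keep us below $\rho$). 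I would then study the extended equation $u'' + \lambda a(t)\tilde g(u) = 0$ and, at the very end, observe that all the solutions produced stay strictly below $\rho$, hence solve the original equation~\eqref{eqpar}.

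The key steps are then as follows. (1) Produce a positive $T$-periodic solution $u^{*}_{\lambda}(t)$ of~\eqref{eqpar} with $\max_{t} u^{*}_{\lambda}(t) < \rho$ for $\lambda$ large: this is the degree-theoretic step, carried out as in \cite{FeZa-15ade}, where the mean value condition $(a_{2})$ is what makes the coincidence degree argument work; the role of large $\lambda$ is to guarantee the a priori bound $\|u^{*}_{\lambda}\|_{\infty} < \rho$ — heuristically, rescaling $u = \lambda^{-\kappa} w$ for a suitable $\kappa>0$ and using the superlinearity-at-zero of $\tilde g$ forces the $T$-periodic solution to be small, since $g''(0)=0$ means $g(s)=o(s)$ near $0$ so the nonlinearity becomes negligible as the solution shrinks, and then a Wirtinger-type inequality controls the oscillation. (2) Check the Morse-index condition~\eqref{hpmorse} for $u^{*}_{\lambda}$: here one invokes the Brown--Hess trick (cf.~\cite{BrHe-90}), which shows that \emph{any} positive $T$-periodic solution of~\eqref{eqpar} has non-zero Morse index; concretely, $u^{*}_{\lambda}$ itself satisfies the linearized-type equation with weight $\lambda a(t) \tilde g(u^{*}_{\lambda}(t))/u^{*}_{\lambda}(t)$, and by strict convexity $(g_{3})$ one has $\tilde g'(s) > \tilde g(s)/s$ for $s>0$, so $\partial_u\bigl(\lambda a(t)\tilde g(u^{*}_{\lambda}(t))\bigr) = \lambda a(t)\tilde g'(u^{*}_{\lambda}(t))$ is a weight that dominates (on $I^{+}_i$) and is dominated (on the negativity set) appropriately relative to the weight annihilated by $u^{*}_{\lambda}$; feeding $u^{*}_{\lambda}$ itself into the variational characterization~\eqref{varlambda} then yields a strictly negative Rayleigh quotient, i.e.~\eqref{hpmorse}. (3) Produce the lower solution $\alpha$ required by $(ii)$ of Proposition~\ref{propsub}: the constant $\alpha \equiv 0$ works, since $0'' + \lambda a(t)\tilde g(0) = 0 \geq 0$ by $(g_{1})$, and $0 < u^{*}_{\lambda}(t)$ for all $t$ because $u^{*}_{\lambda}$ is positive. (4) Apply Proposition~\ref{propsub} to $h(t,u) = \lambda a(t)\tilde g(u)$ (whose Carath\'{e}odory bounds are clear since we will have truncated/extended $\tilde g$ to have at most linear growth, or alternatively one notes global continuability holds for the extended equation once $\tilde g$ is, say, subquadratic — but in fact, since the subharmonics will turn out to stay below $\rho$, one may even pre-truncate $\tilde g$ outside $\mathopen{[}0,\rho\mathclose{]}$ to be bounded, making global continuability automatic), obtaining, for $k \geq k^{*}$ and $1 \leq j \leq m_k$ with $\gcd(j,k)=1$, the pairs $u^{(i)}_{k,j}$ of subharmonics of order $k$ with $u^{(i)}_{k,j} - u^{*}_{\lambda}$ having exactly $2j$ zeros in $\mathopen{[}0,kT\mathclose{[}$, and with $u^{(i)}_{k,j}(t) \geq \alpha(t) = 0$. (5) Close the loop: show $\max_t u^{(i)}_{k,j}(t) < \rho$. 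This is where some care is needed — the subharmonics are produced by the Poincar\'{e}-Birkhoff theorem in an annulus $r_* \leq r(0) \leq R_*$ around $v\equiv 0$, and one must argue that the outer radius $R_*$, together with a uniform a priori bound coming once more from the largeness of $\lambda$ (the same rescaling heuristic: solutions trapped between $0$ and the Poincar\'{e}-Birkhoff outer curve have controlled sup norm, and the bound tends to $0$ as $\lambda \to \infty$), keeps the whole solution below $\rho$; then $u^{(i)}_{k,j}$ solves the genuine equation~\eqref{eqpar}, completing the proof.

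The main obstacle I expect is Step~(5) combined with the $\lambda$-dependence bookkeeping in Step~(1): one needs the a priori bounds on \emph{both} the $T$-periodic solution and the subharmonics to be uniform enough — and to shrink as $\lambda \to \infty$ — so that a single threshold $\lambda^{*}$ simultaneously guarantees (a) existence of $u^{*}_{\lambda}$ below $\rho$, (b) the Poincar\'{e}-Birkhoff machinery of Proposition~\ref{propsub} applying to the extended equation, and (c) all the resulting subharmonics lying below $\rho$. The delicate quantitative point is that the constants $k^{*}$, $m_k$, $R_{*}$ in Proposition~\ref{propsub} a priori depend on $h$, hence on $\lambda$; one should therefore either fix $\lambda > \lambda^{*}$ first and then apply the proposition for that fixed equation (which is in fact all the statement requires — the order of quantifiers in Theorem~\ref{th3} is "for any $\lambda > \lambda^{*}$ there exists $k^{*}$"), thereby sidestepping any need for uniformity of $k^{*}$ in $\lambda$, and use largeness of $\lambda$ \emph{only} to get the sup-norm bounds below $\rho$. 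Phrased this way the proof is a clean reduction; the bulk of the genuinely new work is the degree argument for Step~(1) with the quantitative $\lambda$-estimate, which however parallels \cite{FeZa-15ade}, and the verification of~\eqref{hpmorse} via $(g_{3})$ and Brown--Hess, which is the "simple property" the introduction advertises as the crux.
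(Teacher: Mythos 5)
Your overall architecture coincides with the paper's: extend $g$ beyond $\rho$, obtain a positive $T$-periodic solution by coincidence degree, verify \eqref{hpmorse} via the Brown--Hess trick, take $\alpha\equiv 0$, apply Proposition~\ref{propsub} for each fixed $\lambda$, and use a priori bounds to confine everything below $\rho$ (the paper packages all three theorems of Section~\ref{section-3} into the single Theorem~\ref{th-proof}, applied here with $f=\lambda g$, condition $(f_{4})$ being satisfied for $\lambda$ large). However, two load-bearing steps are not correctly supplied. The first is your concrete verification of \eqref{hpmorse}: plugging $v=u^{*}$ into \eqref{varlambda} with $q=\lambda a(t) g'(u^{*}(t))$ and using $\int_{0}^{T}(u^{*\prime})^{2}=\lambda\int_{0}^{T}a\,g(u^{*})u^{*}$ gives a numerator equal to $\lambda\int_{0}^{T}a(t)\,(u^{*}(t))^{2}\,[\,g(u^{*}(t))/u^{*}(t)-g'(u^{*}(t))\,]\,dt$. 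Strict convexity makes the bracket negative, but $a$ changes sign, so the integrand is positive exactly on the negativity set of $a$ and the sign of the integral is indeterminate (if the bracket were constant, the condition $\int_{0}^{T}a<0$ would even force the \emph{wrong} sign). The actual Brown--Hess computation (Lemma~\ref{lem-morse}) is different: one takes the \emph{positive principal eigenfunction} $v$, multiplies the equation by $f'(u)v$ and the eigenvalue equation by $f(u)$, and two integrations by parts yield $\lambda_{0}\int_{0}^{T}v\,f(u)=-\int_{0}^{T}v\,f''(u)(u')^{2}<0$, with no sign information on $a$ required.

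The second issue is the mechanism by which large $\lambda$ keeps solutions below $\rho$. Your rescaling/Wirtinger heuristic is not the paper's argument and, as you yourself observe, it is unclear how it would produce a bound on $kT$-periodic solutions that works for every $k$ --- which is exactly what your Step (5) needs for the subharmonics. The paper's Lemma~\ref{lem-4.1} does this: if a $kT$-periodic solution reached $\max u=\rho^{*}\geq\rho$, the maximum must lie in some positivity interval $I^{+}_{i}$ by convexity outside them; concavity on $I^{+}_{i}$ then forces $u\geq M_{1}\rho^{*}$ there and $|u'|\leq\rho^{*}/\varepsilon$ on its inner part, and integrating the equation over that inner part gives $f(M_{1}\rho^{*})\,\eta_{\varepsilon}\leq 2\rho^{*}/\varepsilon$; by monotonicity of $s\mapsto f(s)/s$ this contradicts $\lambda g(M_{1}\rho)/(M_{1}\rho)>M_{2}$, which is precisely what $\lambda>\lambda^{*}$ buys. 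The constants $M_{1},M_{2}$ depend only on $a$, and the estimate holds for all $k\geq 1$ simultaneously, so a single threshold $\lambda^{*}$ handles both the harmonic solution and all the subharmonics. Without a lemma of this kind, your Step (5) remains open.
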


Of course, in the above statement $g(s)$ may be defined also for $s > \rho$, but no assumptions on its behavior are made.
For instance, we can apply Theorem~\ref{th3} to parameter-dependent equations like
\begin{equation}\label{eq-par}
u'' + \lambda a(t) \biggl{(} \dfrac{u^{\gamma}}{1 + u^{\sigma}} \biggr{)} = 0,
\end{equation}
with $\sigma \geq \gamma -1 > 0$, obtaining the following: for any $\rho > 0$ small enough, there exists $\lambda^{*} = \lambda^{*}(\rho) > 0$ such that for any $\lambda > \lambda^{*}$ equation
\eqref{eq-par} has a positive $T$-periodic solution as well as positive subharmonic solutions of any large order; all these periodic solutions, moreover, have maximum less than $\rho$. 
In such a way, we can complement - in the direction of proving the existence of positive subharmonics - 
recent results dealing with positive harmonic solutions in
the asymptotically linear case $\sigma = \gamma -1$
(see \cite[Corollary~3.7]{FeZa-15ade}) and in the sublinear one $\sigma > \gamma-1$ (see \cite{BoFeZa-16,BoZa-12}).
It is worth noticing that, according to \cite[Theorem~4.3]{BoFeZa-16}, in this latter case a further positive $T$-periodic solution (having maximum greater than $\rho$) to \eqref{eq-par} appears.
This second solution is expected to have typically zero Morse index, and no positive subharmonic solutions oscillating around it.

\begin{remark}\label{rem-per}
We notice that all the positive subharmonic solutions of order $k$ found in this section actually have minimal period $kT$ if we further assume that $T > 0$ is the \emph{mimimal} period of $a(t)$. This is easily seen, by writing \eqref{eqmain} in the equivalent form $a(t) = - u''(t)/g(u(t))$.   
$\hfill\lhd$
\end{remark}

\section{Proof of the main results}\label{section-4}

In this section we provide the proof of the results presented in Section~\ref{section-3}.
Actually, we are going to give and prove a further statement,
which looks slightly more technical but has the advantage of unifying all the situations considered in Theorem~\ref{th1}, Theorem~\ref{th2} and Theorem~\ref{th3}.

Henceforth, we deal with the equation
\begin{equation}\label{eq-proof}
u'' + a(t)f(u) = 0,
\end{equation}  
where $f \in \mathcal{C}^{2}(\mathopen{[}0,\rho\mathclose{]})$, for some $\rho > 0$ finite, and satisfies:
\begin{equation*}
f(0) = 0
\leqno{(f_{1})}
\end{equation*}
\begin{equation*}
f'(0) = 0
\leqno{(f_{2})}
\end{equation*}
\begin{equation*}
f''(s) > 0, \quad \text{for every } s \in \mathopen{]}0,\rho\mathclose{]}.
\leqno{(f_{3})}
\end{equation*}
Accordingly, by a solution to equation \eqref{eq-proof} we mean a function $u \in W^{2,1}_{\textnormal{loc}}$, with $0\leq u(t) \leq \rho$ for any $t$ and solving 
\eqref{eq-proof} in the Carath\'{e}odory sense; a solution is said to be positive if $u(t) > 0$ for any $t$.  

\medskip

In this setting, the following result can be given.

\begin{theorem}\label{th-proof}
Let $a \colon \mathbb{R} \to \mathbb{R}$ be a $T$-periodic locally integrable function satisfying $(a_{1})$ and $(a_{2})$.
Then there exist two real constants $M_{1}\in\mathopen{]}0,1\mathclose{[}$ and $M_{2}>0$ such that,
for any $\rho > 0$ and for any $f \in \mathcal{C}^{2}(\mathopen{[}0,\rho\mathclose{]})$ satisfying $(f_{1})$, $(f_{2})$, $(f_{3})$ and 
\begin{equation*}
\dfrac{f(M_{1}\rho)}{M_{1}\rho} > M_{2},
\leqno{(f_{4})}
\end{equation*}
the following holds true: there exists a positive $T$-periodic solution $u^{*}(t)$ of \eqref{eq-proof} with $\max_{t \in \mathbb{R}}u^{*}(t) < \rho$;
moreover, there exists $k^{*} \geq 1$ such that for any integer $k \geq k^{*}$ there exists an integer $m_{k} \geq 1$ such that, for any integer $j$ relatively prime with $k$ and such that
$1 \leq j \leq m_{k}$, equation \eqref{eq-proof} has two positive subharmonic solutions $u_{k,j}^{(i)}(t)$ ($i=1,2$) of order $k$ (not belonging to the same periodicity class),
with $\max_{t \in \mathbb{R}}u_{k,j}^{(i)}(t) < \rho$ and such that $u_{k,j}^{(i)}(t) - u^{*}(t)$
has exactly $2j$ zeros in the interval $\mathopen{[}0,kT\mathclose{[}$.
\end{theorem}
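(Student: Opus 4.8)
The strategy is to produce a positive $T$-periodic solution $u^{*}(t)$ of \eqref{eq-proof} with the extra property that its associated linearized equation has non-zero Morse index, and then invoke Proposition~\ref{propsub} to get the subharmonics oscillating around it. The first half follows the coincidence degree approach of \cite{FeZa-15ade}: extend $f$ to a bounded $\mathcal{C}^{1}$ function on all of $\mathbb{R}$ (say by setting it equal to $f(\rho)$ for $s\ge\rho$ and to $0$ for $s\le 0$), write the $T$-periodic problem for $u''+a(t)\,\tilde f(u)=0$ as a fixed point equation $u=\Phi(u)$ in $\mathcal{C}_T$ via the standard projector decomposition, and compute the Leray--Schauder/coincidence degree on a suitable open set. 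Concretely I would work on an annular-type set $\Omega = \{u : r < \max u < R,\ u > 0 \text{ suitably}\}$ in the cone of non-negative functions, or — following \cite{FeZa-15ade} more literally — show $\deg = 0$ on a large ball and $\deg = 1$ (or vice versa) on a small ball, via an a priori bound argument using $(a_1)$, $(a_2)$, and the superlinearity condition $(f_4)$: condition $(f_4)$ is precisely what forces solutions with $\max u \ge M_1\rho$ to be pushed out, i.e.\ it quantifies ``$f(u)/u$ large'' so that on the interval where $a$ is positive the solution cannot stay trapped. The constants $M_1\in\mathopen]0,1\mathclose[$ and $M_2>0$ come out of these a priori estimates and depend only on $a$ (through the lengths of the $I_i^+$, $\|a\|_{L^1}$, and $\int_0^T a$), not on $\rho$ or $f$; a homotopy to an autonomous or simpler problem, together with the mean value condition $(a_2)$ used exactly as in the computation \eqref{meancon} of the introduction to kill spurious large solutions, gives a non-trivial degree and hence a positive $T$-periodic solution $u^{*}$ with $\max u^{*} < \rho$ (so it solves the original, un-extended equation).

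The crucial second ingredient is the Morse index information. Here I would use the Brown--Hess trick \cite{BrHe-90}: divide \eqref{eq-proof} by $u^{*}(t)$ and integrate over $[0,T]$, or more precisely test the linearized equation $v'' + a(t)f'(u^{*}(t))v = 0$ against $u^{*}$ itself. Using the variational characterization \eqref{varlambda} with the test function $v = u^{*}$,
\begin{equation*}
\lambda_0\bigl(a(t)f'(u^{*}(t))\bigr) \le \frac{\int_0^T \bigl((u^{*}{}')^2 - a(t)f'(u^{*})(u^{*})^2\bigr)\,dt}{\int_0^T (u^{*})^2\,dt},
\end{equation*}
and I would show the numerator is strictly negative. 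Multiplying \eqref{eq-proof} by $u^{*}$ and integrating gives $\int_0^T (u^{*}{}')^2\,dt = \int_0^T a(t)f(u^{*})u^{*}\,dt$, so the numerator equals $\int_0^T a(t)\bigl(f(u^{*})u^{*} - f'(u^{*})(u^{*})^2\bigr)\,dt = \int_0^T a(t)(u^{*})^2\bigl(\tfrac{f(u^{*})}{u^{*}} - f'(u^{*})\bigr)\,dt$. By strict convexity $(f_3)$ together with $(f_1)$, one has $f(s)/s < f'(s)$ for every $s\in\mathopen]0,\rho\mathclose]$, so the integrand is $a(t)$ times a \emph{strictly negative} function. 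This is not yet sign-definite because $a$ changes sign, so the argument needs the sharpening from \cite{BrHe-90}: instead test against $u^{*}$ in the eigenvalue quotient after a further manipulation, or argue directly that if $\lambda_0 \ge 0$ the equation $v''+a f'(u^{*})v=0$ would be disconjugate, contradicting the existence of the positive solution $u^{*}$ of the strictly ``sub''-version $u''+a\,(f(u^{*})/u^{*})u=0$ by a Sturm-type comparison (since $f'(u^{*}) > f(u^{*})/u^{*}$ pointwise where $u^{*}>0$). Either route establishes \eqref{hpmorse} for $u^{*}$, which is exactly hypothesis $(i)$ of Proposition~\ref{propsub}.

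For hypothesis $(ii)$ I would take the trivial lower solution $\alpha \equiv 0$: since $f(0)=0$ by $(f_1)$, we have $\alpha'' + a(t)f(\alpha) = 0 \ge 0$, and $\alpha \equiv 0 < u^{*}(t)$ because $u^{*}$ is positive. Global continuability is not automatic because of the superlinear character, so — as flagged in the introduction — one works instead with the equation where $f$ has been modified for $u \ge \rho$ (the extension above is globally Lipschitz, hence Cauchy problems are globally continuable), applies Proposition~\ref{propsub} to get subharmonics $u_{k,j}^{(i)}$ with $\alpha \le u_{k,j}^{(i)}$, and finally recovers an a priori bound $\max u_{k,j}^{(i)} < \rho$ so that these solve the genuine equation. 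The localization $0 = \alpha \le u_{k,j}^{(i)}$ gives non-negativity, and positivity follows since a non-negative solution of \eqref{eq-proof} with a zero would, by uniqueness for the Cauchy problem at that point, be identically zero, contradicting that $u_{k,j}^{(i)} - u^{*}$ has exactly $2j \ge 2$ zeros. The upper bound $\max u_{k,j}^{(i)} < \rho$ is where I expect the main difficulty: Proposition~\ref{propsub} only yields the lower barrier $\alpha$, and one must separately prove that the Poincar\'e--Birkhoff solutions of the \emph{modified} equation cannot reach the level $\rho$ — presumably by choosing $M_1, M_2$ (equivalently, demanding $f(M_1\rho)/(M_1\rho)$ large) so that the a priori estimates forcing $u^{*}<\rho$ also apply, with uniform constants, to all $kT$-periodic solutions lying above $\alpha\equiv 0$; this uniformity in $k$ is the delicate point and is what ties the whole argument to the precise statement of $(f_4)$. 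Finally, Theorems~\ref{th1}, \ref{th2}, \ref{th3} follow by checking that $(f_4)$ can be met: for Theorem~\ref{th1} take $\rho$ large and use $(g_4)$; for Theorem~\ref{th2} take $\rho$ close to $\delta$ and use $(g_4')$; for Theorem~\ref{th3} replace $f$ by $\lambda g$ and take $\lambda$ large.
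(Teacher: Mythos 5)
Your overall architecture matches the paper's: coincidence degree for a positive $T$-periodic solution $u^{*}$ with $\max u^{*}<\rho$, a Morse index estimate to verify hypothesis $(i)$ of Proposition~\ref{propsub}, the trivial lower solution $\alpha\equiv 0$ for hypothesis $(ii)$, and an a priori bound to pull the subharmonics of the modified equation back below $\rho$. However, the step you yourself call crucial --- the Morse index information --- is not established by either of the two routes you sketch, and both fail for the same reason. Testing the Rayleigh quotient \eqref{varlambda} with $v=u^{*}$ leaves you, as you correctly note, with $\int_0^T a(t)\,u^{*}(t)^{2}\bigl(\tfrac{f(u^{*})}{u^{*}}-f'(u^{*})\bigr)dt$, which has no definite sign because $a$ changes sign. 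Your fallback --- Sturm comparison between $v''+a(t)\tfrac{f(u^{*})}{u^{*}}v=0$ (which admits the positive $T$-periodic solution $u^{*}$, hence has principal eigenvalue $0$) and the linearization $v''+a(t)f'(u^{*})v=0$ --- requires the \emph{pointwise} inequality $a(t)f'(u^{*}(t))\geq a(t)\tfrac{f(u^{*}(t))}{u^{*}(t)}$, and this reverses wherever $a(t)<0$; the scalar inequality $f'(s)>f(s)/s$ alone is not enough, so the comparison argument founders on exactly the same obstruction as the variational one. The actual Brown--Hess trick is different: let $v>0$ be the principal eigenfunction of $v''+(\lambda_0+a(t)f'(u^{*}))v=0$, multiply the equation for $u^{*}$ by $f'(u^{*})v$ and the eigenvalue equation by $f(u^{*})$, and subtract. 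The two terms $a(t)f(u^{*})f'(u^{*})v$ cancel \emph{identically}, so the indefinite weight disappears from the identity, and two integrations by parts yield $\lambda_0\int_0^T v\,f(u^{*})\,dt=-\int_0^T v\,f''(u^{*})\,{u^{*}}'(t)^{2}\,dt<0$ by $(f_{3})$ and $(f_{*})$. Without this exact cancellation the argument does not close.

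A second, smaller gap concerns the uniform-in-$k$ bound $\max u_{k,j}^{(i)}<\rho$, which you rightly flag as the delicate point but leave unproven; moreover your proposed extension ($\hat f\equiv f(\rho)$ for $s\geq\rho$) would undermine the mechanism that proves it. The paper extends $f$ by its tangent line at $\rho$, so that $\hat f$ remains convex and $s\mapsto\hat f(s)/s$ remains non-decreasing on all of $\mathbb{R}^{+}$. Then, for any $kT$-periodic solution with $\max u=\rho^{*}\geq\rho$, convexity of $u$ on the set where $a\leq 0$ forces the maximum into a single positivity interval $I^{+}_{i}$; concavity of $u$ there gives $u\geq M_{1}\rho^{*}$ and $|u'|\leq\rho^{*}/\varepsilon$ on a slightly shrunk subinterval, and integrating the equation over that subinterval yields $\hat f(M_{1}\rho^{*})/(M_{1}\rho^{*})\leq M_{2}$, hence $f(M_{1}\rho)/(M_{1}\rho)\leq M_{2}$ by the monotonicity of $\hat f(s)/s$ --- contradicting $(f_{4})$. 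The estimate is local to one $I^{+}_{i}$ and therefore independent of $k$, which is what makes the same lemma serve both for the degree computation (with the additional forcing term $\nu\mathbbm{1}$) and for the subharmonics at the end. With a constant extension, $\hat f(s)/s$ decreases beyond $\rho$ and the final monotonicity step breaks down; this is exactly where the linear extension earns its keep.
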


It is clear that all the theorems in Section~\ref{section-3} follows from Theorem~\ref{th-proof}. 
More precisely:
\begin{itemize}
\item
in order to obtain Theorem~\ref{th1}, we take $f = g$ and $\rho > 0$ large enough: then $(f_{1})$, $(f_{2})$, $(f_{3})$ correspond to
$(g_{1})$, $(g_{2})$, $(g_{3})$, while $(f_{4})$ comes from $(g_{4})$;
\item
in order to obtain Theorem~\ref{th2}, we take $f = g$ and $\rho < \delta$ with $\delta - \rho$ small enough: then $(f_{1})$, $(f_{2})$, $(f_{3})$ correspond to
$(g_{1})$, $(g_{2})$, $(g_{3})$, while $(f_{4})$ comes from $(g_{4}')$;
\item
in order to obtain Theorem~\ref{th3}, we take $f = \lambda g$: then $(f_{1})$, $(f_{2})$, $(f_{3})$ correspond to
$(g_{1})$, $(g_{2})$, $(g_{3})$ (independently on $\lambda > 0$), while $(f_{4})$ is certainly satisfied for $\lambda > 0$ large enough.
\end{itemize}

\medskip

Now we are going to prove Theorem~\ref{th-proof}. Wishing to apply Proposition~\ref{propsub}, we proceed as follows.
First, we define an extension $\hat{f}(s)$ of $f(s)$ for $s \geq \rho$, having linear growth at infinity and thus ensuring the global continuability of the
(positive) solutions of $u'' + a(t)\hat{f}(u)$; in doing this, we need to check that any periodic solution of this modified equation is actually smaller
than $\rho$, thus solving the original equation $u'' + a(t)f(u) = 0$. This is the most technical part of the proof (producing the constants $M_{1}, M_{2}$
appearing in assumption $(f_{4})$) and is developed in Section~\ref{section-4.1}.
Second, in Section~\ref{section-4.2}, using a degree theoretic approach (and taking advantage of the a-priori bound given in the previous section),
we prove the existence of a positive $T$-periodic solution of $u'' + a(t)f(u) = 0$.
Third, in Section~\ref{section-4.3} we provide the desired Morse index information.
The easy conclusion of the proof is finally given in Section~\ref{section-4.4} (we just notice here that the existence of a lower solution $\alpha(t) < u^{*}(t)$
is straightforward, since we can take $\alpha(t) \equiv 0$).

It is worth noticing that condition $(f_{3})$ (requiring in particular that $f \in \mathcal{C}^{2}$) will be essential only in Section~\ref{section-4.3}. 
For this reason, we carry out the discussion in Section~\ref{section-4.1} and Section~\ref{section-4.2} (containing results which may have some independent interests)
in a slightly more general setting than the one in Theorem~\ref{th-proof}.

\subsection{The a-priori bound}\label{section-4.1}

In this section, we prove an a-priori bound valid for periodic solutions of \eqref{eq-proof}
as well as for periodic solutions of a related equation (see \eqref{eq-lem-4.2} below).
This will be useful both for the application of Proposition~\ref{propsub} (requiring a globally defined nonlinearity) and for the degree approach discussed in the Section~\ref{section-4.2}. 
 
As already anticipated, in this section we do not assume all the conditions on $f(s)$ required in Theorem~\ref{th-proof}. More precisely,
we are going to deal with continuously differentiable functions $f \colon \mathopen{[}0,\rho\mathclose{]} \to \mathbb{R}^{+}$ satisfying $(f_{1})$ and the following condition
\begin{equation*}
f(s) > 0, \quad  \text{for every } s \in \mathopen{]}0,\rho\mathclose{]}.
\leqno{(f_{*})}
\end{equation*}
Moreover, instead of $(f_{3})$ we just suppose that $f(s)$ is a \textit{convex function}, namely
\begin{equation*}
f(\vartheta s_{1}+(1-\vartheta)s_{2}) \leq \vartheta f(s_{1}) + (1-\vartheta) f(s_{2}),
\quad \forall \, s_{1},s_{2}\in\mathopen{[}0,\rho\mathclose{]}, \; \forall \, \vartheta\in\mathopen{[}0,1\mathclose{]}.
\end{equation*}

For further convenience, we observe that from the above conditions it follows that $f(s)$ is 
non-decreasing and such that $s\mapsto f(s)/s$ is a non-decreasing map in $\mathopen{]}0,\rho\mathclose{]}$.
Indeed, let $0<s_{1}<s_{2}$ and let $\vartheta\in\mathopen{[}0,1\mathclose{]}$ be such that $s_{1}=\vartheta s_{2}$. Then, we have
\begin{equation*}
f(s_{1}) = f(\vartheta s_{2} + (1-\vartheta)0) \leq \vartheta f(s_{2}) + (1-\vartheta) f(0) = \dfrac{s_{1}}{s_{2}}f(s_{2})
\end{equation*}
and thus the map $s\mapsto f(s)/s$ is non-decreasing in $\mathopen{]}0,\rho\mathclose{]}$.
Consequently, we immediately obtain that $s\mapsto f(s)$ is a non-decreasing map in $\mathopen{[}0,\rho\mathclose{]}$,
since it is the product of two non-decreasing positive maps in $\mathopen{]}0,\rho\mathclose{]}$. 

We also recall that a function $f\in\mathcal{C}^{1}(\mathopen{[}0,\rho\mathclose{]})$ is convex if and only if
$f(s)$ lies above all of its tangents, hence
\begin{equation*}
f(s_{1})\geq f(s_{2})+f'(s_{2})(s_{1}-s_{2}), \quad \forall \, s_{1},s_{2}\in\mathopen{[}0,\rho\mathclose{]}.
\end{equation*}
Using $(f_{1})$, $(f_{*})$ and the above inequality (with $s_{1}=0$ and $s_{2}=\rho$), we immediately obtain
\begin{equation*}
f'(\rho) \geq \dfrac{f(\rho)}{\rho} > 0.
\end{equation*}

With this in mind, we introduce the extension $\hat{f} \colon \mathbb{R}^{+} \to \mathbb{R}$ defined as
\begin{equation}\label{def-hat}
\hat{f}(s):=
\begin{cases}
\, f(s), & \text{if } s\in \mathopen{[}0,\rho\mathclose{]}; \\
\, f(\rho)+f'(\rho)(s-\rho), & \text{if } s\in \mathopen{]}\rho,+\infty\mathclose{[}.
\end{cases}
\end{equation}
It is easily seen that the map $\hat{f}(s)$ is continuously differentiable, convex, non-decreasing and such that $\hat{f}(s)>0$ for all $s>0$.
Then, arguing as above, we immediately obtain that the map $s\mapsto\hat{f}(s)/s$ is non-decreasing as well.

\medskip

We are now in a position to state our technical result (whose proof benefits from some arguments developed in \cite[p.~421]{GaHaZa-03}
and in \cite[Lemma~4.1]{BoFeZa-pp2015}) giving a-priori bounds for periodic solutions of the equation 
\begin{equation}\label{eq-lem-4.2}
u'' + a(t)\hat{f}(u) + \nu \mathbbm{1}_{\bigcup_{i=1}^{m} I^{+}_{i}}(t) = 0,
\end{equation}
where $\nu \geq 0$ and $\mathbbm{1}_{\bigcup_{i=1}^{m} I^{+}_{i}}(t)$ denotes the indicator function of the set $\bigcup_{i=1}^{m} I^{+}_{i}$. 
Incidentally, notice that neither the mean value condition $(a_{2})$ nor the superlinearity assumption at zero $(f_{2})$ are required in the statement below.

\begin{lemma}\label{lem-4.1}
Let $a \colon \mathbb{R} \to \mathbb{R}$ be a $T$-periodic locally integrable function satisfying $(a_{1})$.
Then there exist two real constants $M_{1}\in\mathopen{]}0,1\mathclose{[}$ and $M_{2}>0$ such that,
for every $\rho>0$, for every convex function $f\in\mathcal{C}^{1}(\mathopen{[}0,\rho\mathclose{]})$ satisfying
$(f_{1})$, $(f_{*})$ and $(f_{4})$, for every $\nu\geq0$ and for every integer $k\geq1$, any $kT$-periodic solution $u(t)$ to \eqref{eq-lem-4.2}
satisfies $\max_{t \in \mathbb{R}} u(t) < \rho$.
\end{lemma}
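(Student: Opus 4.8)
The plan is to derive a contradiction from the assumption that some $kT$-periodic solution $u(t)$ of \eqref{eq-lem-4.2} attains a maximum value $\geq \rho$. I would work on one fixed period $\mathopen{[}0,kT\mathclose{]}$ and exploit the sign structure of $a(t)$ encoded in $(a_1)$: outside $\bigcup_{i=1}^m I^+_i$ we have $a(t)\leq 0$ and the forcing term $\nu\mathbbm{1}_{\bigcup I^+_i}$ vanishes, so on each ``negativity interval'' $u''(t) = -a(t)\hat f(u(t)) \geq 0$, i.e.\ $u$ is convex there; on the intervals $I^+_i$ the solution is concave up to the bounded forcing. The first step is to record the elementary consequences of convexity of $\hat f$ established just before the lemma: $\hat f$ is non-decreasing and $s\mapsto \hat f(s)/s$ is non-decreasing on $\mathopen{]}0,+\infty\mathclose{[}$; in particular, if $u$ ever reaches a large value, then $\hat f(u)/u$ is bounded below by $\hat f(M_1\rho)/(M_1\rho) > M_2$ there, which is the only place $(f_4)$ enters.

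Next I would set up the geometry. Let $t_0$ be a point where $u$ attains its maximum $\max u =: \|u\|_\infty$, and suppose for contradiction $\|u\|_\infty \geq \rho$. Since on the negativity set $u$ is convex, a convex function on an interval cannot have an interior maximum unless it is constant, so $t_0$ must lie in the closure of some $I^+_i$ (or $u$ is constant, which is excluded because $\hat f>0$ on $(0,\infty)$ and $a\not\equiv 0$ on $I^+_i$). The key quantitative step is then a two-sided estimate. On one hand, integrating $u'' = -a\hat f(u) - \nu\mathbbm{1}$ over a negativity interval adjacent to $I^+_i$ and using convexity there, I can bound how much $u'$ can change, and hence get a lower bound on $\min_{\mathbb{R}} u$ in terms of $\|u\|_\infty$ — this is where one shows that if the max is $\geq\rho$ then $u$ stays above $M_1\rho$ on a controlled portion of the cycle (the constant $M_1\in\mathopen{]}0,1\mathclose{[}$ comes from the lengths of the negativity intervals and $\|a^-\|_{L^1}$, depending only on $a$ and $T$, not on $f$, $\rho$, $\nu$, $k$). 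On the other hand, integrating the equation over one full period $\mathopen{[}0,kT\mathclose{]}$ gives $\int_0^{kT} a(t)\hat f(u(t))\,dt + \nu\,|\{\bigcup I^+_i\}\cap[0,kT]| = 0$; splitting the first integral into the positive and negative parts of $a$ and using the lower bound $u\geq M_1\rho$ where it holds, together with monotonicity of $\hat f(s)/s$, produces an inequality of the form $\big(\hat f(M_1\rho)/(M_1\rho)\big)\cdot(\text{something depending only on }a) \leq \text{const}(a)$, forcing $\hat f(M_1\rho)/(M_1\rho)\leq M_2$ and contradicting $(f_4)$.

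The main obstacle, and the part requiring genuine care, is the first (local) estimate: controlling $\min u$ from below in terms of $\max u$ uniformly in $k$, $\nu$ and the nonlinearity. The subtlety is that on $I^+_i$ the solution is concave and can bend downward arbitrarily fast if $\hat f(u)$ is large, so one cannot naively integrate there; instead I would argue on the negativity intervals only, where $u$ is convex and hence lies above its chords/tangents, to show $u$ cannot drop too steeply, and combine this with the fact that the total measure of the positive intervals in one period is fixed — this is precisely the ``$\mathopen{[}p.~421\mathclose{]}$ of \cite{GaHaZa-03}'' and ``\cite[Lemma~4.1]{BoFeZa-pp2015}'' type of argument referenced in the text. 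One also needs the forcing term $\nu\mathbbm{1}$ to only help (it makes $u''$ more negative on $I^+_i$, i.e.\ pushes the max down, and is absent on the negativity set), so its presence does not spoil the estimate and the constants $M_1,M_2$ can be chosen independently of $\nu\geq 0$. Finally I would double-check that the extension $\hat f$ from \eqref{def-hat} retains convexity and the monotonicity of $\hat f(s)/s$ across $s=\rho$ (already noted in the text), so that the argument applies verbatim to solutions possibly exceeding $\rho$ and then concludes that in fact they cannot.
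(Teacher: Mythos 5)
Your structural setup is right: the contradiction hypothesis $\max u =: \rho^{*} \geq \rho$, the localization of the maximum in some positivity interval $I^{+}_{i}$ via convexity of $u$ on the negativity set, and the entry point of $(f_{4})$ through the monotonicity of $s \mapsto \hat{f}(s)/s$. But the quantitative core of your argument would not close. You propose (a) to bound $\min_{\mathbb{R}} u$ from below by working on the negativity intervals, and (b) to integrate the equation over the whole period $[0,kT]$. Step (b) fails: the full-period balance reads $\int_{0}^{kT} a^{+}(t)\hat{f}(u(t))\,dt + \nu k\sum_{i}|I^{+}_{i}| = \int_{0}^{kT} a^{-}(t)\hat{f}(u(t))\,dt$, and the only available upper bound for the right-hand side is $k\Vert a^{-}\Vert_{L^{1}([0,T])}\hat{f}(\rho^{*})$. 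Since $\hat{f}(\rho^{*}) \geq \hat{f}(M_{1}\rho^{*})$, with a ratio that can be arbitrarily large for convex $f$ (e.g.\ $M_{1}^{-p}$ for $f(s)=s^{p}$), no contradiction with $(f_{4})$ --- which controls $\hat{f}(M_{1}\rho)/(M_{1}\rho)$, not the ratio $\hat{f}(M_{1}\rho^{*})/\hat{f}(\rho^{*})$ --- can be extracted; and this is independent of the $\nu$-term. Step (a) is also shaky: convexity of $u$ on a negativity interval (which, incidentally, puts $u$ \emph{below} its chords and above its tangents, not above its chords) does not prevent $u$ from dropping steeply there unless you first control $u'$ at the endpoint $\tau_{i}$ of the preceding positivity interval, and $u'(\tau_{i})$ admits no a priori bound --- the slope can indeed be huge near the endpoints of $I^{+}_{i}$, as you yourself observe.

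The paper's proof does essentially the opposite of what you suggest: it integrates the equation only over the shrunk positivity interval $[\sigma_{i}+\varepsilon,\tau_{i}-\varepsilon]$ containing the maximum, and it is precisely the concavity of $u$ on $I^{+}_{i}$ --- which you dismiss as an obstacle to integrating there --- that makes this work. Concavity yields the tent estimate $u(t) \geq \frac{\rho^{*}}{|I^{+}_{i}|}\min\{t-\sigma_{i},\tau_{i}-t\}$, hence $u \geq M_{1}\rho^{*}$ on the shrunk interval with $M_{1}=\varepsilon/\max_{i}|I^{+}_{i}|$, and also the derivative bound $|u'(t)| \leq u(t)/\varepsilon \leq \rho^{*}/\varepsilon$ there (since $u'$ is non-increasing on $I^{+}_{i}$, one has $u(t) \geq u(t)-u(\sigma_{i}) \geq u'(t)(t-\sigma_{i}) \geq \varepsilon u'(t)$ when $u'(t)>0$, and symmetrically from $\tau_{i}$). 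Integrating the equation over $[\sigma_{i}+\varepsilon,\tau_{i}-\varepsilon]$ and using the monotonicity of $\hat{f}$ then gives $\hat{f}(M_{1}\rho^{*})\int_{\sigma_{i}+\varepsilon}^{\tau_{i}-\varepsilon} a^{+} \leq u'(\sigma_{i}+\varepsilon)-u'(\tau_{i}-\varepsilon)-\nu(\tau_{i}-\sigma_{i}-2\varepsilon) \leq 2\rho^{*}/\varepsilon$: the $\nu$-term enters with a favorable sign and the bound is independent of $k$ and of $\hat{f}(\rho^{*})$. Dividing by $M_{1}\rho^{*}\eta_{\varepsilon}$ and using the monotonicity of $\hat{f}(s)/s$ yields $\hat{f}(M_{1}\rho)/(M_{1}\rho) \leq 2/(M_{1}\varepsilon\eta_{\varepsilon}) =: M_{2}$, contradicting $(f_{4})$. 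No lower bound on $\min_{\mathbb{R}}u$ and no global integration are needed.
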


\begin{proof}
According to condition $(a_{1})$, we can find $2m + 1$ points
\begin{equation*}
\sigma_{1} < \tau_{1} < \ldots < \sigma_{i} < \tau_{i} < \ldots < \sigma_{m} < \tau_{m} < \sigma_{m+1}, \quad \text{with } \; \sigma_{m+1}-\sigma_{1} = T,
\end{equation*}
such that
\begin{equation*}
I^{+}_{i} = \mathopen{[}\sigma_{i},\tau_{i}\mathclose{]}, \qquad i=1,\ldots,m.
\end{equation*}
We then fix $\varepsilon > 0$ such that
\begin{equation*}
\varepsilon < \dfrac{|I^{+}_{i}|}{2}
\quad \text{ and } \quad
\int_{\sigma_{i}+\varepsilon}^{\tau_{i}-\varepsilon} a^{+}(t)~\!dt > 0,
\qquad \text{for all } i\in\{1,\ldots,m\};
\end{equation*}
so that the constant
\begin{equation*}
\eta_{\varepsilon} := \min_{i=1,\ldots,m} \int_{\sigma_{i}+\varepsilon}^{\tau_{i}-\varepsilon} a^{+}(t)~\!dt
\end{equation*}
is well defined and positive.
Next, we define the constants
\begin{equation*}
M_{1}:= \dfrac{\varepsilon}{\displaystyle \max_{i=1,\ldots,m}|I^{+}_{i}|}
\quad \text{ and } \quad
M_{2}:= \dfrac{2}{M_{1} \varepsilon \eta_{\varepsilon}}.
\end{equation*}
Notice that $M_{1}\in\mathopen{]}0,1\mathclose{[}$, since $\varepsilon < |I^{+}_{i}|$, for all $i=1,\ldots,m$. We stress that $M_{1}$ and $M_{2}$ depend only on the weight function $a(t)$.

Let us consider an arbitrary $\nu \geq 0$ and an arbitrary convex function $f\in\mathcal{C}^{1}(\mathopen{[}0,\rho\mathclose{]})$ satisfying $(f_{1})$, $(f_{*})$ and $(f_{4})$.
By contradiction, we suppose that $u(t)$ is a $kT$-periodic solution of \eqref{eq-lem-4.2} such that
\begin{equation*}
\max_{t\in \mathbb{R}} u(t) =: \rho^{*} \geq \rho.
\end{equation*}
Setting $I^{+}_{i,\ell}:=I^{+}_{i}+\ell T$ (for $i= 1,\ldots,m$ and $\ell\in\mathbb{Z}$),
the convexity of $u(t)$ on $\mathbb{R}\setminus\bigcup_{i,\ell}I^{+}_{i,\ell}$
ensures that the maximum is attained in some $I^{+}_{i,\ell}$.
Accordingly, we can suppose that there is an index $i\in\{1,\ldots,m\}$ and $\ell\in\{0,\ldots,k-1\}$ such that
\begin{equation*}
\max_{t\in I^{+}_{i,\ell}} u(t) = \rho^{*}.
\end{equation*}
Up to a relabeling of the intervals $I^{+}_{i,\ell}$, we can also suppose $\ell=0$ (notice that the constants $M_{1}$ and $M_{2}$ do not change since $a(t)$ is $T$-periodic).
From now on, we therefore assume that 
\begin{equation*}
\max_{t\in I^{+}_{i}} u(t) = \rho^{*}.
\end{equation*}
From this fact, together with the concavity of $u(t)$ on $I^{+}_{i}$, we obtain
\begin{equation*}
u(t)\geq \dfrac{\rho^{*}}{|I^{+}_{i}|}\min\{t-\sigma_{i},\tau_{i}-t\}, \quad \forall \, t\in I^{+}_{i},
\end{equation*}
(cf.~\cite[p.~420]{GaHaZa-03} for a similar estimate)
and hence
\begin{equation}\label{eq-4.4}
u(t)\geq \dfrac{\varepsilon\rho^{*}}{\displaystyle \max_{i=1,\ldots,m}|I^{+}_{i}|} = M_{1}\rho^{*}, \quad \forall \, t\in I^{+}_{i}.
\end{equation}
On the other hand, we claim that
\begin{equation*}
|u'(t)| \leq \dfrac{u(t)}{\varepsilon}, \quad \forall \, t\in \mathopen{[}\sigma_{i}+\varepsilon,\tau_{i}-\varepsilon\mathclose{]}.
\end{equation*}
Indeed, if $t\in \mathopen{[}\sigma_{i}+\varepsilon,\tau_{i}-\varepsilon\mathclose{]}$ is such that $u'(t)=0$, the result is trivial.
If $u'(t)>0$, again using the concavity of $u(t)$ on $I^{+}_{i}$, we have
\begin{equation*}
u(t) \geq u(t) - u(\sigma_{i}) = \int_{\sigma_{i}}^{t} u'(\xi)~\!d\xi
\geq u'(t) (t-\sigma_{i}) \geq u'(t)\varepsilon.
\end{equation*}
Analogously, if $u'(t)<0$, we have
\begin{equation*}
u(t) \geq u(t) - u(\tau_{i}) = - \int_{t}^{\tau_{i}} u'(\xi)~\!d\xi
\geq -u'(t) (\tau_{i}-t) \geq - u'(t)\varepsilon.
\end{equation*}
The claim is thus proved. As a consequence,
\begin{equation}\label{eq-4.5}
|u'(t)| \leq \dfrac{\rho^{*}}{\varepsilon}, \quad \forall \, t\in \mathopen{[}\sigma_{i}+\varepsilon,\tau_{i}-\varepsilon\mathclose{]}.
\end{equation}

Integrating equation \eqref{eq-lem-4.2} on $\mathopen{[}\sigma_{i}+\varepsilon,\tau_{i}-\varepsilon\mathclose{]}$
and using \eqref{eq-4.4}, \eqref{eq-4.5} and the monotonicity of $s\mapsto \hat{f}(s)$, we have
\begin{align*}
\hat{f}(M_{1}\rho^{*}) \int_{\sigma_{i}+\varepsilon}^{\tau_{i}-\varepsilon} a^{+}(t)~\!dt
   &\leq \int_{\sigma_{i}+\varepsilon}^{\tau_{i}-\varepsilon} a^{+}(t) \hat{f}(u(t))~\!dt =
   \int_{\sigma_{i}+\varepsilon}^{\tau_{i}-\varepsilon}\bigl{(} - u''(t) - \nu \bigr{)}~\!dt
\\ &= u'(\sigma_{i}+\varepsilon) - u'(\tau_{i}-\varepsilon) - \nu \, (\tau_{i}-\sigma_{i}-2\varepsilon)
\leq \dfrac{2\rho^{*}}{\varepsilon}.
\end{align*}
Dividing by $M_{1}\rho^{*}\eta_{\varepsilon}$ the above inequality and using the monotonicity of the map $s\mapsto \hat{f}(s)/s$, we obtain 
\begin{equation*}
\dfrac{f(M_{1}\rho)}{M_{1}\rho} = \dfrac{\hat{f}(M_{1}\rho)}{M_{1}\rho} \leq \dfrac{\hat{f}(M_{1}\rho^{*})}{M_{1}\rho^{*}} \leq \dfrac{2}{M_{1} \varepsilon \eta_{\varepsilon}} = M_{2},
\end{equation*}
a contradiction with respect to hypothesis $(f_{4})$.
\end{proof}

\begin{remark}\label{rem-4.a}
It is worth noticing that, in the above proof, the fact that $u(t)$ is $kT$-periodic is used only to ensure, via a convexity argument,
that its maximum is achieved in some positivity interval $I^{+}_{i}$. Accordingly, it is easily seen that the conclusion of Lemma~\ref{lem-4.1} still holds true for any globally defined bounded solution of \eqref{eq-lem-4.2}, as well as for solutions defined on compact intervals and satisfying Dirichlet/Neumann conditions at the boundary. 
$\hfill\lhd$
\end{remark}

\subsection{Existence of $T$-periodic solutions: a degree approach}\label{section-4.2}

In this section, using a topological degree approach firstly introduced in \cite{FeZa-15jde}
and then developed in several recent works (see \cite{BoFeZa-pp2015,BoFeZa-16,FeZa-15ade,FeZa-pp2015}), we prove the existence of a positive $T$-periodic solution of \eqref{eq-proof}. 

Since we are going to take advantage of the a-priori bound developed in Lemma~\ref{lem-4.1}, we assume again that
$f(s)$ is a convex function satisfying $(f_{1})$, $(f_{*})$ and $(f_{4})$; moreover, now also the superlinearity at zero condition $(f_{2})$ and the mean value
assumption $(a_{2})$ play a crucial role.

\begin{proposition}\label{prop-exist}
Let $a\colon \mathbb{R} \to \mathbb{R}$ be a $T$-periodic locally integrable function satisfying $(a_{1})$ and $(a_{2})$.
Let $f\in\mathcal{C}^{1}(\mathopen{[}0,\rho\mathclose{]})$ be a convex function satisfying
$(f_{1})$, $(f_{2})$, $(f_{*})$ and $(f_{4})$ with $M_{1}\in\mathopen{]}0,1\mathclose{[}$ and $M_{2}>0$ the constants given in Lemma~\ref{lem-4.1}. 
Then there exists a positive $T$-periodic solution $u^{*}(t)$ of \eqref{eq-proof} such that $\max_{t \in \mathbb{R}}u^{*}(t)<\rho$. 
\end{proposition}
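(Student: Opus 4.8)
The plan is to apply Mawhin's coincidence degree theory to the $T$-periodic problem associated with the \emph{extended} equation
\[
u'' + a(t)\hat f(u) = 0,
\]
where $\hat f$ is the globally defined extension in \eqref{def-hat}. Working with $\hat f$ (rather than $f$ itself) is what makes the functional-analytic setup legitimate: $\hat f$ has linear growth, so the associated $L$-compact operator is well defined on bounded sets. Once we produce a positive $T$-periodic solution $u^{*}$ of the extended equation, the a-priori bound of Lemma~\ref{lem-4.1} (with $\nu = 0$, $k=1$) forces $\max_{t}u^{*}(t)<\rho$, hence $u^{*}$ lives in the region where $\hat f \equiv f$ and is therefore a genuine positive $T$-periodic solution of \eqref{eq-proof}; the conclusion $\max_{t}u^{*}(t)<\rho$ is exactly what is claimed.

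The degree-theoretic core follows the scheme of \cite{FeZa-15jde,FeZa-15ade}: one shows that the topological degree of the fixed-point operator on a large ball $B(0,R)$ is $0$, while on a small ball $B(0,r)$ it is nonzero (typically $1$), so by additivity there is a solution in the annulus $\{r < \|u\| < R\}$, which is automatically positive and nontrivial. First I would reduce the search of positive solutions to solving, via a standard truncation, the equation with $\hat f$ replaced by $\hat f_{+}(u):=\hat f(\max\{u,0\})$, and check by a maximum-principle / mean-value argument (using $(a_{2})$ and $(f_{*})$, exactly as in the computation leading to \eqref{meancon} in the introduction) that any nontrivial $T$-periodic solution of the truncated problem is strictly positive. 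The degree $=0$ on large balls comes from the superlinearity encoded in $(f_{4})$ together with the structure $(a_{1})$ of the weight: on the positivity intervals $\hat f(M_1\rho)/(M_1\rho) > M_2$ is precisely the quantitative input that, through the estimate used in Lemma~\ref{lem-4.1}, rules out large solutions of the family of problems $u'' + a(t)\hat f(u) + \nu\,\mathbbm 1_{\bigcup I^{+}_{i}} = 0$ for $\nu \geq 0$; letting $\nu\to+\infty$ one shows the relevant homotopy has no solutions on $\partial B(0,R)$ and the degree vanishes. The degree $\neq 0$ on small balls follows from the superlinearity at zero $(f_{2})$: near $u=0$ the nonlinearity $a(t)\hat f(u)$ is negligible compared to $u$, so a homotopy to the trivial (nonresonant) linear problem $u''=0$ — or, more precisely, the standard argument that the only small solutions of the homotoped family are $0$ — gives degree $1$ on $B(0,r)$ for $r$ small.

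The main obstacle I expect is the degree-$=0$ computation on large balls: it is the only place where one must combine the indefinite sign structure $(a_{1})$, the mean-value condition $(a_{2})$, and the quantitative superlinearity $(f_{4})$ all at once, and where the auxiliary forcing term $\nu\,\mathbbm 1_{\bigcup I^{+}_{i}}$ (whose purpose is exactly to kill the degree, as in \cite{FeZa-15jde}) enters. Concretely, one needs: (a) an a-priori bound, uniform in $\nu\in[0,+\infty[$, for $T$-periodic solutions of the $\nu$-family — this is supplied by Lemma~\ref{lem-4.1} and Remark~\ref{rem-4.a}; and (b) the fact that for $\nu$ large the $\nu$-problem has \emph{no} $T$-periodic solution at all (integrate over a positivity interval and use that $a^{+}\hat f(u)\geq 0$ there while the added constant forces $u''$ to be very negative, contradicting periodicity/boundedness). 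Granting these two points, the homotopy invariance of coincidence degree along $\nu\in[0,N]$ plus excision gives $\deg = 0$ on $B(0,R)$ for $R$ chosen past the uniform bound. Everything else — the $L$-compactness of the operator, the nonresonance of the linear part $Lu=u''$ on $T$-periodic functions (handled the usual way by projecting onto the mean), and the positivity of the obtained solution — is routine and can be cited from \cite{FeZa-15jde,FeZa-15ade,BoFeZa-16}.
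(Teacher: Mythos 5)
Your overall architecture matches the paper's proof: truncate the nonlinearity, use Mawhin's coincidence degree, show the degree vanishes on a large ball via the auxiliary family $u'' + a(t)f(u) + \nu\,\mathbbm{1}_{\bigcup_i I^{+}_{i}}(t) = 0$ (a priori bound from Lemma~\ref{lem-4.1} plus nonexistence for $\nu$ large), show it is nonzero on a small ball, and conclude by additivity. The large-ball half is essentially the paper's Step~2 and Step~3 (the paper gets the nonexistence for large $\nu$ by integrating over the \emph{whole} period, which kills the $u''$ term by periodicity and avoids having to control $u'$ at the endpoints of a positivity interval, as your sketch would require; and it truncates $f$ by a constant beyond $\rho$ rather than using the linear extension $\hat f$, but these are cosmetic differences).

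The genuine gap is in your small-ball degree computation. You claim degree $1$ on $B(0,r)$ by homotoping to ``the trivial (nonresonant) linear problem $u''=0$''. But $u''=0$ with $T$-periodic boundary conditions is \emph{resonant}: every constant is in the kernel of $L$, so $D_{L}(L,B(0,r))$ is not even admissible (constants of modulus $r$ are solutions on the boundary), and this is precisely why one must work in the coincidence-degree framework rather than with an invertible linear part. More importantly, the nontriviality of the small-ball degree is \emph{not} a consequence of $(f_{2})$ alone; it is exactly the place where the mean value condition $(a_{2})$ enters. The paper's Step~1 runs the homotopy $u'' + \vartheta a(t)f(u)=0$, $\vartheta\in\mathopen{]}0,1\mathclose{]}$, excludes solutions with $\|u\|_{\infty}=r$ using the sublinearity $f(s)/s\to 0$ at $0^{+}$, and then Mawhin's reduction theorem identifies $D_{L}(L-N,B(0,r))$ with the Brouwer degree of the averaged map
\begin{equation*}
s \mapsto -\dfrac{1}{T}\int_{0}^{T}\tilde{f}(t,s)~\!dt,
\end{equation*}
which for $0<s\leq r$ equals $-\bigl(\tfrac{1}{T}\int_{0}^{T}a\bigr)f(s)>0$ \emph{because} $\int_{0}^{T}a<0$; this sign change across $s=0$ is what yields degree $1$. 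If $\int_{0}^{T}a$ were positive the averaged map would not change sign and the small-ball degree would vanish, so an argument that never invokes $(a_{2})$ at this stage cannot be correct. (Relatedly, your use of $(a_{2})$ to prove positivity of nontrivial solutions is misplaced: positivity follows from the maximum principle for the truncation at $s\leq 0$ together with $(f_{1})$--$(f_{2})$ and uniqueness for the Cauchy problem, not from the mean value condition.)
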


\begin{proof}
We are going to use a topological argument based on \textit{Mawhin's coincidence degree theory} (cf.~\cite{Ma-93}).

First, taking into account condition $(f_{1})$, we introduce the $L^{1}$-Carath\'{e}odory function
\begin{equation*}
\tilde{f}(t,s) :=
\begin{cases}
\, -s, & \text{if } s \leq 0;\\
\, a(t)f(s), & \text{if } 0 \leq s \leq \rho;\\
\, a(t)f(\rho), & \text{if } s \geq \rho;
\end{cases}
\end{equation*}
and we consider the $T$-periodic problem associated with
\begin{equation}\label{eq-ftilde}
u'' + \tilde{f}(t,u) = 0.
\end{equation}
A standard maximum principle ensures that every $T$-periodic solution of \eqref{eq-ftilde} is non-negative;
moreover, in view of $(f_{2})$, if $u(t)$ is a $T$-periodic solution of \eqref{eq-ftilde} with $u\not\equiv0$, then $u(t)>0$ for all $t$.

Next, we write the $T$-periodic problem associated with \eqref{eq-ftilde} as a \textit{coincidence equation}
\begin{equation}\label{coinc-eq}
Lu = Nu,\quad u\in \text{\rm dom}\,L.
\end{equation}
As a first observation, let us recall that finding a $T$-periodic solution of \eqref{eq-ftilde} is equivalent to solving 
equation \eqref{eq-ftilde} on $\mathopen{[}0,T\mathclose{]}$ together with the periodic boundary condition
\begin{equation*}
u(0)=u(T), \qquad u'(0)=u'(T). 
\end{equation*}
Accordingly, let $X:=\mathcal{C}(\mathopen{[}0,T\mathclose{]})$ be the Banach space of continuous functions $u \colon \mathopen{[}0,T\mathclose{]} \to \mathbb{R}$,
endowed with the $\sup$-norm $\|u\|_{\infty} := \max_{t\in \mathopen{[}0,T\mathclose{]}} |u(t)|$,
and let $Z:=L^{1}(\mathopen{[}0,T\mathclose{]})$ be the Banach space of integrable functions $v \colon \mathopen{[}0,T\mathclose{]} \to \mathbb{R}$,
endowed with the norm $\|v\|_{L^{1}}:= \int_{0}^{T} |v(t)|~\!dt$.
Next we consider the differential operator
\begin{equation*}
L \colon u \mapsto - u'',
\end{equation*}
defined on
\begin{equation*}
\text{\rm dom}\,L := \bigl{\{}u\in W^{2,1}(\mathopen{[}0,T\mathclose{]}) \colon u(0) = u(T), \; u'(0) = u'(T) \bigr{\}} \subseteq X.
\end{equation*}
It is easy to prove that $L$ is a linear Fredholm map of index zero. Moreover, in order to enter the coincidence degree setting, we have to define the projectors
$P \colon X \to \ker L \cong {\mathbb{R}}$, $Q \colon Z \to \text{\rm coker}\,L \cong Z/\text{\rm Im}\,L \cong \mathbb{R}$,
the right inverse $K_{P} \colon \text{\rm Im}\,L \to \text{\rm dom}\,L \cap \ker P$ of $L$,
and the linear (orientation-preserving) isomorphism $J \colon \text{\rm coker}\,L \to \ker L$.
For the standard positions we refer to \cite[Section~2]{FeZa-15ade} and to \cite[Section~2]{BoFeZa-16}.
Finally, let us denote by $N \colon X \to Z$ the Nemytskii operator induced by the function $\tilde{f}(t,s)$, that is
\begin{equation*}
(N u)(t):= \tilde{f}(t,u(t)), \quad t\in \mathopen{[}0,T\mathclose{]}.
\end{equation*}

\smallskip

With this position, now we prove that there exists an open (bounded) set $\Omega\subseteq X$, with
\begin{equation}\label{eq-omega}
\Omega\subseteq B(0,\rho) \setminus \{0\},
\end{equation}
such that the coincidence degree $D_{L}(L-N,\Omega)$ of $L$ and $N$ in $\Omega$ is defined and different from zero.
In this manner, by the \textit{existence property} of the degree, there exists at least a nontrivial solution $u^{*}$ of \eqref{coinc-eq}
with $\|u^{*}\|_\infty < \rho$. Hence, $u^{*}(t)$ is a $T$-periodic solution of \eqref{eq-ftilde}.
As a consequence of the maximum principle, as already noticed, this solution is positive;
moreover, being $u^{*}(t)< \rho$ for any $t$, it solves the original equation \eqref{eq-proof}.

We split our argument into three steps. In the following, when referring to a solution $u(t)$ of \eqref{eq-step1} and \eqref{eq-step2}
we implicitly assume that $0\leq u(t) \leq \rho$, for all $t\in\mathbb{R}$, since $f(s)$ is defined on $\mathopen{[}0,\rho\mathclose{]}$.

\smallskip
\noindent
\textit{Step 1. }
\textit{There exists a constant $r\in\mathopen{]}0,\rho\mathclose{[}$ such that any $T$-periodic solution $u(t)$ of
\begin{equation}\label{eq-step1}
u'' + \vartheta a(t)f(u) = 0,
\end{equation}
for $0 < \vartheta \leq 1$, satisfies $\|u\|_{\infty} \neq r$.}

Indeed, since condition $(f_{2})$ can be written in the equivalent form
\begin{equation*}
\lim_{s\to 0^{+}}\dfrac{f(s)}{s}=0,
\end{equation*}
we can proceed (by contradiction) exactly as in the proof of \cite[Theorem~3.2]{FeZa-15ade} (thus, we omit the proof).

\smallskip
\noindent
\textit{Step 2. }
\textit{There exists a constant $\nu_{0} > 0$ such that any $T$-periodic solution $u(t)$ of
\begin{equation}\label{eq-step2}
u'' + a(t)f(u) + \nu \mathbbm{1}_{\bigcup_{i=1}^{m} I^{+}_{i}}(t)= 0,
\end{equation}
for $\nu \in \mathopen{[}0,\nu_{0}\mathclose{]}$, satisfies $\|u\|_{\infty} \neq \rho$. Moreover,
there are no $T$-periodic solutions $u(t)$ of \eqref{eq-step2} for $\nu = \nu_{0}$.} 

From Lemma~\ref{lem-4.1} we deduce that, for any $\nu\geq 0$, every $T$-periodic solution $u(t)$ of \eqref{eq-step2} satisfies $\|u\|_{\infty} \neq \rho$
(notice that the definition of
the extension $\tilde{f}(t,s)$ for $s\geq \rho$ has no role in this proof).
Next, we fix a constant $\nu_{0} > 0$ such that
\begin{equation*}
\nu_{0}>\dfrac{\|a\|_{L^{1}} \max_{0\leq s \leq \rho}f(s)}{\sum_{i=1}^{m} |I^{+}_{i}|}.
\end{equation*}
We have only to verify that, for $\nu = \nu_{0}$, there are no $T$-periodic solutions $u(t)$ of \eqref{eq-step2}.
Indeed, if $u(t)$ is a $T$-periodic solution of \eqref{eq-step2} 
then, integrating \eqref{eq-step2} on $\mathopen{[}0,T\mathclose{]}$, we obtain
\begin{equation*}
\nu \sum_{i=1}^{m} |I^{+}_{i}| = \nu \int_{0}^{T} \mathbbm{1}_{\bigcup_{i=1}^{m} I^{+}_{i}}(t)~\!dt \leq \int_{0}^{T}|a(t)|f(u(t))~\!dt \leq \|a\|_{L^{1}}\,\max_{0\leq s \leq \rho}f(s),
\end{equation*}
a contradiction with respect to the choice of $\nu_{0}$.

\smallskip
\noindent
\textit{Step 3. }
\textit{Computation of the degree. }
First of all, we compute the coincidence degree on $B(0,r)$. From \cite[Theorem~2.4]{Ma-93} and \textit{Step 1.}, we obtain that
\begin{equation}\label{eq-deg1}
D_{L}(L-N,B(0,r)) = \text{\rm deg}_{B}\biggl{(}-\dfrac{1}{T}\int_{0}^{T}\tilde{f}(t,\cdot)~\!dt,\mathopen{]}-r,r\mathclose{[},0\biggr{)} =1,
\end{equation}
where ``$\text{\rm deg}_{B}$'' denotes the classical Brouwer degree.
For the details, we refer to the proofs of \cite[Lemma~2.2]{BoFeZa-16} and \cite[Theorem~2.1]{FeZa-15ade}.

Secondly, we compute the coincidence degree on $B(0,\rho)$. From the \textit{homotopy invariance} of the degree and \textit{Step 2.}, we obtain that
\begin{equation}\label{eq-deg0}
D_{L}(L-N,B(0,\rho)) = 0.
\end{equation}
For the details, we refer to the proof of \cite[Theorem~2.1]{FeZa-15ade}.

In conclusion, from \eqref{eq-deg1}, \eqref{eq-deg0} and the \textit{additivity property} of the coincidence degree, we find that
\begin{equation*}
D_{L}(L-N, B(0,\rho) \setminus B[0,r]) = -1.
\end{equation*}
This ensures the existence of a nontrivial solution $u^{*}$ to \eqref{coinc-eq}
with 
\begin{equation*}
u^{*}\in \Omega := B(0,\rho) \setminus B[0,r].
\end{equation*}
Recalling \eqref{eq-omega} and the argument explained therein, the proof is concluded.
\end{proof}

\begin{remark}\label{rem-4.1}
It is worth noticing that the existence of a positive $T$-periodic solution to \eqref{eq-proof} could likely be proved under less restrictive assumptions
of $f(s)$. In particular, as shown in \cite{BoFeZa-16,FeZa-15ade}, the conclusion in \textit{Step 1.} is still valid when $f(s)$ is only continuous and
regularly oscillating at zero; on the other hand, again motivated by the results in the aforementioned papers we expect that \textit{Step 2.}
can be proved (with slightly different arguments) under alternative assumptions not requiring the convexity of $f(s)$.
We have chosen however to take advantage of the a-priori bound developed in Lemma~\ref{lem-4.1}, therefore giving the proof in this simplified setting,
since a convexity assumption will be in any case essential in the next Section~\ref{section-4.3}.    
$\hfill\lhd$
\end{remark}

\subsection{The Morse index computation}\label{section-4.3}

In this section we present the (crucial) Morse index computation.
As remarked in the introduction, it is based on an algebraic trick already employed in the proof of \cite[Theorem~1]{BrHe-90},
exploiting in an essential way the strict convexity assumption $(f_{3})$ (together with the sign condition $(f_{*})$).
Notice that all the other assumptions on $f(s)$ and $a(t)$ are not required in Lemma~\ref{lem-morse} below,
which is indeed an a-priori Morse index estimate, valid for positive $T$-periodic solutions of \eqref{eq-proof} independently of 
their existence. 

\begin{lemma}\label{lem-morse}
Let $a\colon \mathbb{R} \to \mathbb{R}$ be a $T$-periodic locally integrable function.
Let $f \in \mathcal{C}^{2}(\mathopen{[}0,\rho\mathclose{]})$ satisfy $(f_{*})$ and $(f_{3})$.
If $u(t)$ is a positive $T$-periodic solution of \eqref{eq-proof}, then
\begin{equation*}
\lambda_{0}\bigl{(}a(t)f'(u(t)) \bigr{)}<0,
\end{equation*}
where $\lambda_0\bigl{(}a(t)f'(u(t)) \bigr{)}$ denotes (as in Section~\ref{section-2}) the principal eigenvalue of the $T$-periodic problem associated with 
$v'' + (\lambda + a(t)f'(u(t))) v = 0$.
\end{lemma}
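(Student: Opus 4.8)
The plan is to use the variational characterization \eqref{varlambda} of the principal eigenvalue: writing $q(t) := a(t)f'(u(t))$, it is enough to produce a single function $v \in H^{1}_{T}$ with $v \not\equiv 0$ and
\[
\mathcal{Q}(v) := \int_{0}^{T}\bigl(v'(t)^{2} - q(t)\,v(t)^{2}\bigr)\,dt < 0 ,
\]
because then $\lambda_{0}(q) \le \mathcal{Q}(v)/\int_{0}^{T}v^{2} < 0$. The first thing to observe is that the obvious guess $v = u$ is \emph{not} good enough: after one integration by parts (using $u'' = -a(t)f(u)$) it leads to $\int_{0}^{T} a(t)\,u(t)\bigl(f(u(t)) - u(t)f'(u(t))\bigr)\,dt$, and although strict convexity $(f_{3})$ forces the factor $f(u) - u f'(u)$ to be (strictly) negative, the weight $a(t)$ changes sign, so nothing can be concluded. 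This is precisely where the algebraic trick of Brown and Hess is needed.

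The key step is to test \eqref{varlambda} with $v(t) := f(u(t))$. This function is $T$-periodic; by $(f_{*})$ (and since $u(t) \in \mathopen{]}0,\rho\mathclose{]}$ for all $t$) it is everywhere positive, hence admissible and non-trivial; and since $u \in W^{2,1}_{\textnormal{loc}}$ and $f \in \mathcal{C}^{2}$, it belongs to $W^{2,1}_{\textnormal{loc}}$, with $v' = f'(u)u'$ and, using \eqref{eq-proof} in the form $u'' = -a(t)f(u)$,
\[
v'' = f''(u)\,u'^{2} + f'(u)\,u'' = f''(u)\,u'^{2} - a(t)\,f'(u)\,f(u) .
\]
Now I would integrate $\int_{0}^{T} v'^{2} = -\int_{0}^{T} v'' v$ by parts (the boundary terms vanish by $T$-periodicity) and substitute this expression for $v''$ together with $v = f(u)$; the two terms containing $a(t)\,f'(u)\,f(u)^{2}$ cancel and one is left with
\[
\mathcal{Q}(f(u)) = -\int_{0}^{T} f''(u(t))\,f(u(t))\,u'(t)^{2}\,dt .
\]

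To conclude, note that $(f_{3})$ gives $f''(u(t)) > 0$ and $(f_{*})$ gives $f(u(t)) > 0$ for every $t$, so the integrand is non-negative and $\mathcal{Q}(f(u)) \le 0$, with equality only when $u' \equiv 0$. But $u$ cannot be a positive constant: $u \equiv c$ would force $a(t)f(c) \equiv 0$ with $f(c) > 0$, hence $a \equiv 0$, a degenerate case in which the statement is empty. Therefore $u$ is non-constant, $\mathcal{Q}(f(u)) < 0$, and \eqref{varlambda} yields $\lambda_{0}\bigl(a(t)f'(u(t))\bigr) < 0$. The point worth stressing is that there is no genuine computation here beyond one integration by parts; the only real obstacle is conceptual, namely that the indefiniteness of $a(t)$ rules out any pointwise or term-by-term comparison of the linearized equation $v'' + a(t)f'(u)v = 0$ with the equation $v'' + a(t)\tfrac{f(u)}{u}v = 0$ that $u$ itself solves (and whose principal eigenvalue is $0$), so the whole content of the argument is packed into the choice $v = f(u)$, which makes the indefinite part cancel and leaves the strict convexity of $f$ to force the sign.
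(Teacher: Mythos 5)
Your proof is correct, and it reaches the same key identity as the paper --- namely that the Brown--Hess substitution $f(u)$ makes the indefinite term $\int_0^T a(t)f'(u(t))f(u(t))^2\,dt$ cancel, leaving $-\int_0^T f''(u(t))f(u(t))u'(t)^2\,dt<0$ --- but you arrive there by a genuinely different mechanism. The paper does not use the variational characterization \eqref{varlambda} at all: it takes a \emph{positive principal eigenfunction} $v(t)$ of $v''+(\lambda_0+a(t)f'(u(t)))v=0$, multiplies \eqref{eq-proof} by $f'(u)v$ and the eigenvalue equation by $f(u)$, and after two integrations by parts obtains the exact formula $\lambda_0\int_0^T v\,f(u)\,dt=-\int_0^T v\,f''(u)\,u'^2\,dt$, from which $\lambda_0<0$ follows since both integrals have a sign. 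Your version replaces the pairing against the eigenfunction by a single admissible test function in the Rayleigh quotient, so it needs the identity \eqref{varlambda} (which the paper records in Remark~\ref{confronto} with a reference to Magnus--Winkler) but not the existence, positivity and regularity of the principal eigenfunction; in exchange, the paper's route yields an explicit expression for $\lambda_0$ itself (weighted by the eigenfunction) rather than only an upper bound for it, and stays entirely within ODE manipulations. One shared caveat: your exclusion of the constant-solution case via ``$u$ constant forces $a\equiv 0$'' is exactly the paper's parenthetical remark that $u'\not\equiv 0$ ``in view of $(f_*)$''; in both arguments this tacitly assumes $a\not\equiv 0$ (as guaranteed by $(a_1)$ in every application), since for $a\equiv 0$ one indeed has positive constant solutions and $\lambda_0=0$. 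Your side remarks (why $v=u$ fails, and the comparison with the equation $v''+a(t)\tfrac{f(u)}{u}v=0$ solved by $u$) are accurate and well placed, though the strict negativity of $f(u)-uf'(u)$ uses $f(0)=0$, which is not among the hypotheses of the lemma itself.
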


\begin{proof}
Let $u(t)$ be a positive $T$-periodic solution of \eqref{eq-proof}
and let $v(t)$ be a \textit{positive} eigenfunction associated to the principal eigenvalue $\lambda_{0}=\lambda_{0}(a(t)f'(u(t)))$.
Then, $v(t)$ satisfies
\begin{equation}\label{eq-4.10}
v''+ \bigl{(} \lambda_{0} + a(t)f'(u(t)) \bigr{)}v = 0,
\end{equation}
is $T$-periodic and $v(t)>0$ for all $t\in\mathbb{R}$ (cf.~\cite{CoLe-55}).

By multiplying \eqref{eq-proof} by $f'(u)v$ we obtain
\begin{equation*}
u''f'(u)v + a(t)f(u)f'(u)v = 0
\end{equation*}
and, respectively, by multiplying \eqref{eq-4.10} by $f(u)$ we have
\begin{equation*}
v''f(u) + \bigl{(} \lambda_{0} + a(t)f'(u) \bigr{)} v f(u) = 0.
\end{equation*}
From the above equalities, we therefore immediately deduce
\begin{align*}
\lambda_{0} v(t)f(u(t))
   &= -a(t)f(u(t))f'(u(t))v(t) - v''(t)f(u(t)) 
\\ &= u''(t)f'(u(t))v(t) - v''(t)f(u(t)), \quad \forall \,t\in\mathbb{R}.
\end{align*}
Integrating by parts this equality, we obtain
\begin{align*}
&\lambda_{0}\int_{0}^{T} v(t)f(u(t))~\!dt = \int_{0}^{T} \bigl{(}u''(t)f'(u(t))v(t) - v''(t)f(u(t)) \bigr{)}~\!dt
\\ &= \Bigl{[}-v'(t)f(u(t)) \Bigr{]}_{t=0}^{t=T} + \int_{0}^{T} \bigl{(}u''(t)f'(u(t))v(t) + v'(t)f'(u(t))u'(t) \bigr{)}~\!dt
\\ &= \int_{0}^{T} \bigl{(}u''(t)f'(u(t))v(t) + v'(t)f'(u(t))u'(t) \bigr{)}~\!dt.
\end{align*}
Via a further integration by parts, we find
\begin{align*}
&\int_{0}^{T} \bigl{(}v(t)f'(u(t))u''(t) + v'(t)f'(u(t))u'(t) \bigr{)}~\!dt
\\ &= \Bigl{[}v(t)f'(u(t)) u'(t)\Bigr{]}_{t=0}^{t=T} - \int_{0}^{T} v(t) f''(u(t))u'(t)^{2}~\!dt
\\ &= -\int_{0}^{T} v(t) f''(u(t))u'(t)^{2}~\!dt.
\end{align*}
In conclusion,
\begin{equation*}
\lambda_{0}\int_{0}^{T} v(t)f(u(t))~\!dt = -\int_{0}^{T} v(t) f''(u(t))u'(t)^{2}~\!dt.
\end{equation*}
Observing now that both the above integrals are positive, since $v(t)>0$ for all $t\in\mathbb{R}$ and $f(s)$ satisfies $(f_{3})$ and $(f_{*})$
(notice that $u'(t)\not\equiv0$, again in view of $(f_{*})$), we immediately deduce that
\begin{equation*}
\lambda_{0}=\lambda_{0}\bigl{(}a(t)f'(u(t)) \bigr{)}<0.
\end{equation*}
The lemma is thus proved.
\end{proof}

\begin{remark}\label{rem-indmorse1}
We observe that Lemma~\ref{lem-morse} in particular applies to the function $f(s) = s^{p}$ with $p > 1$, implying that,
whenever $u(t)$ is a positive $T$-periodic solution of $u'' + a(t) u^{p} = 0$, the Morse index of the linear equation
\begin{equation*}
v'' + p \, a(t) u(t)^{p-1}v = 0
\end{equation*}
is non-zero. On the other hand, it is worth noticing that
\begin{equation*}
\int_{0}^{T} a(t)  u(t)^{p-1}~\!dt < 0,
\end{equation*}
as it can be easily seen by writing $a(t)  u(t)^{p-1} = \tfrac{u''(t)}{p u(t)}$ and integrating by parts (compare with the computation leading to \eqref{meancon} in the introduction).
This provides an elegant proof of the claim made in Remark~\ref{confronto}: that is, for a linear equation $v'' + q(t)v = 0$, with $q(t)$ sign-changing,
the mean value condition $\int_{0}^{T} q(t)~\!dt \leq 0$ does not imply that the Morse index is zero. Also, this shows that the main result in \cite{BoZa-13} is not applicable to 
the equation $u'' + a(t) u^{p} = 0$, emphasizing the essential role of its abstract variant given in Proposition~\ref{propsub} (see again the discussion in Remark~\ref{confronto}).
$\hfill\lhd$
\end{remark}

\begin{remark}\label{rem-indmorse2}
Recalling that the Morse index of a positive $T$-periodic solution $u(t)$ of \eqref{eq-proof} is the Morse index of the linear equation
$v'' + a(t)f'(u(t)) v = 0$, Lemma~\ref{lem-morse} asserts that any positive $T$-periodic solution of \eqref{eq-proof} has non-zero Morse index.
From a variational point of view, this implies that $u(t)$, as a critical point of the action functional
\begin{equation*}
J(u) = \int_{0}^{T} \biggl{(} \frac{1}{2} u'(t)^{2} - a(t)F(u(t))\biggr{)} ~\!dt, \quad \text{where } F(u) = \int_{0}^{u} f(\xi)~\!d\xi,
\end{equation*}
is not a local minimum. We stress again that this is an a-priori information, valid for \textit{any} positive $T$-periodic solution of \eqref{eq-proof}; on the other hand,
it requires the global convexity assumption $(f_{3})$, which is usually not needed for existence results (see Remark~\ref{rem-4.1}).
It appears therefore a natural question if it is possible to prove, using variational arguments of mountain pass type (on the lines of \cite{AlTa-93,BeCaDoNi-95}),
the existence of \textit{at least one} positive $T$-periodic solution with non-zero Morse index, under less restrictive assumptions on $f(s)$.
This however does not seem to be an easy task, since - thought the local topological structure of a functional near a min-max critical point can be analyzed - estimates
from below for the Morse index are usually possible only under non-denegeracy assumptions (see, for instance, \cite{Gh-91,Ho-84}).
$\hfill\lhd$
\end{remark}

\subsection{Conclusion of the proof}\label{section-4.4}

We are now in a position to easily complete the proof of Theorem~\ref{th-proof}. Of course, we assume henceforth that $(f_{1})$, $(f_{2})$, $(f_{3})$ and 
$(f_{4})$ are satisfied, with $M_{1}, M_{2}$ the constants given by Lemma~\ref{lem-4.1}. As a consequence, $f(s)$ is (strictly) convex and the sign condition $(f_{*})$ holds true,
so that all the results in Section~\ref{section-4.1}, Section~\ref{section-4.2} and Section~\ref{section-4.3} can be used.

Let us define, for $(t,s) \in \mathbb{R}^{2}$, 
\begin{equation*}
h(t,s) :=
\begin{cases}
\, 0, & \text{if } s \leq 0;\\
\, a(t)\hat f(s), & \text{if } s \geq 0;
\end{cases}
\end{equation*}
where $\hat{f}(s)$ is given by \eqref{def-hat}. Using $(f_{1})$ and $(f_{2})$, it is easy to see that the function $h(t,s)$ satisfies
the smoothness conditions required in Proposition~\ref{propsub}.
Moreover, since $\hat{f}(s)$ has linear growth at infinity, the global continuability for the solutions of
\begin{equation}\label{eq-final}
u'' + h(t,u) = 0
\end{equation}
is guaranteed.
We now claim that both the assumptions $(i)$ and $(ii)$ of Proposition~\ref{propsub} are satisfied.

Indeed, Proposition~\ref{prop-exist} implies the existence of a $T$-periodic function $u^{*}(t)$, solving 
\eqref{eq-proof} and such that $0 < u^{*}(t) < \rho$ for any $t \in \mathbb{R}$; moreover, from Lemma~\ref{lem-morse} we know that
\begin{equation*}
\lambda_{0}\bigl{(}a(t)f'(u^{*}(t)) \bigr{)} < 0.
\end{equation*}
Since $h(t,s) = a(t)f(s)$ for $0 \leq s \leq \rho$, we have thus obtained a $T$-periodic solution of \eqref{eq-final} satisfying \eqref{hpmorse}.
Hence, condition $(i)$ is fulfilled.

As for condition $(ii)$, we simply take $\alpha(t) \equiv 0$, due to the fact that $\alpha(t)$ is a (trivial) solution of 
\eqref{eq-final}, and $0 = \alpha(t) < u^{*}(t)$ for any $t \in \mathbb{R}$.

Proposition~\ref{propsub} thus ensures that there exists $k^{*} \geq 1$ such that, for any integer $k \geq k^{*}$,
there exists an integer $m_{k} \geq 1$ such that, for any integer $j$ relatively prime with $k$ and such that
$1 \leq j \leq m_{k}$, equation \eqref{eq-final} has two subharmonic solutions $u_{k,j}^{(i)}(t)$ ($i=1,2$) of order $k$ (not belonging to the same periodicity class),
such that $u_{k,j}^{(i)}(t) - u^{*}(t)$ has exactly $2j$ zeros in the interval $\mathopen{[}0,kT\mathclose{[}$.
From the fact that \eqref{localiz} holds true (with $\alpha(t) \equiv 0$) together with Remark~\ref{remalpha}, we obtain that $u_{k,j}^{(i)}(t) > 0$ for any $t \in \mathbb{R}$.
We finally use Lemma~\ref{lem-4.1} (with $\nu = 0$) to ensure that $u_{k,j}^{(i)}(t) < \rho$ for any $t \in \mathbb{R}$. 
Thus $u_{k,j}^{(i)}(t)$ is a positive subharmonic solutions of equation \eqref{eq-proof} and the proof is concluded.
\qed

\begin{remark}\label{rem-4.4}
Reading more carefully the proof of Lemma~\ref{lem-morse}, one can notice that we do not use the fact that
the interval $\mathopen{[}0,\rho\mathclose{]}$ is a right neighborhood of zero. Indeed, the same conclusion holds true by
taking an interval $J\subseteq\mathbb{R}$ in place of $\mathopen{[}0,\rho\mathclose{]}$. Accordingly, we can state the following result.
\begin{quote}
\textit{Let $a\colon \mathbb{R} \to \mathbb{R}$ be a $T$-periodic locally integrable function. Let $J\subseteq\mathbb{R}$ be an interval.
Let $f \in \mathcal{C}^{2}(J)$ satisfy $f(s)>0$ and $f''(s)>0$ for any $s\in J$.
If $u(t)$ is a $T$-periodic solution of $u''+a(t)f(u)=0$ (thus, in particular, $u(t)\in J$ for any $t$), then
$\lambda_{0}\bigl{(}a(t)f'(u(t)) \bigr{)}<0$.}
\end{quote}

Although less general, Lemma~\ref{lem-morse} is the version more suitable to be subsequently applied to the search of positive subharmonics with range in $\mathopen{]}0,\rho\mathclose{[}$.
However, a natural question arises. Suppose to consider a nonlinearity $f(s)$ as above,
namely a $\mathcal{C}^{2}$-function which is positive and strictly convex in an interval $J\subseteq\mathbb{R}$.
Given a $T$-periodic solution $u^{*}(t)$ to $u''+a(t)f(u)=0$ (with $u^{*}(t)\in J$ for any $t$),
which additional conditions on $f(s)$ guarantee the applicability of the method adopted in this paper to find subharmonics of order $k$?
$\hfill\lhd$
\end{remark}

\section{Final remarks}\label{section-5}

We conclude the paper with a brief discussion about some natural questions which our result may suggest, if compared to the existing literature.  
As in the introduction, we focus our attention to the model superlinear equation
\begin{equation}\label{eq-fin}
u'' + a(t) u^{p} = 0,
\end{equation}
with $a(t)$ satisfying $(a_{1})$ and $(a_{2})$, and $p > 1$. 

Let us first recall that our main motivation for investigating the existence of positive subharmonic solutions to \eqref{eq-fin}
has been given by previous results obtaining positive subharmonic solutions when the weight function $a(t)$ has ``large'' negative part.  
More precisely, it was shown in \cite{BaBoVe-15,FeZa-pp2015} that, assuming to deal with a parameter-dependent weight
\begin{equation*}
a_\mu(t) := q^{+}(t) - \mu q^{-}(t),
\end{equation*}
equation \eqref{eq-fin} with $a(t) = a_{\mu}(t)$ has positive subharmonic solutions (of \textit{any} order)
whenever $\mu \gg 0$. Such a result, which may be interpreted in the context of singular perturbation problems, 
provides indeed positive subharmonic solutions 
which can be characterized by the fact of being either ``small'' or ``large'' on the intervals of positivity of the weight function 
(according to a chaotic-like multibump behavior).
A careful comparison between this result and Theorem~\ref{th-intro} could deserve some interest.

In a similar spirit, it is worth recalling that, again according to \cite{BaBoVe-15,FeZa-pp2015}, whenever $(a_{1})$ holds with $m \geq 2$,  
equation \eqref{eq-fin} with $a(t) = a_{\mu}(t)$ and $\mu \gg 0$ has at least $2^{m}-1$
distinct positive $T$-periodic solutions, say $u^{*}_{i}(t)$ for $i=1,\ldots,2^{m}-1$. Since 
Lemma~\ref{lem-morse} implies that any of these periodic solutions has non-zero Morse index,
Proposition~\ref{propsub} can be in principle applied $2^{m}-1$ times to obtain positive subharmonic solutions oscillating around
each $u^{*}_{i}(t)$. It seems however a quite delicate question to understand if these subharmonic solutions are actually distinct or not.  

Finally, we observe that it appears very natural to consider the damped
version of \eqref{eq-fin}, namely
\begin{equation}\label{eq-damp}
u'' + cu' + a(t) u^{p} = 0,
\end{equation}
where $c \in \mathbb{R}$ is an arbitrary constant. Indeed, it was shown in \cite{FeZa-15ade} that conditions
$(a_{1})$ and $(a_{2})$ also guarantee the existence of a positive $T$-periodic solution to \eqref{eq-damp}.
What about positive suhharmonic solutions? It is generally expected that the periodic solutions 
provided by the Poincar\'e-Birkhoff fixed point theorem disappear for (even small) perturbations destroying the Hamiltonian structure,
but maybe this is not the case for the positive subharmonic solutions to \eqref{eq-damp}.
Let us observe, for instance, that the multibump subharmonics constructed in \cite{FeZa-pp2015} via degree theory for
$a(t) = a_{\mu}(t)$ (and $\mu$ large) still exist for $c \neq 0$. Since both the symplectic approach and the variational one are useless
in a non-Hamiltonian setting, investigating the general case of an arbitrary weight function with a negative mean value seems to be a difficult problem.

\section*{Acknowledgements}\label{acknowledgements}

Work performed under the auspicies of the Grup\-po Na\-zio\-na\-le per l'Anali\-si Ma\-te\-ma\-ti\-ca, la Pro\-ba\-bi\-li\-t\`{a} e le lo\-ro
Appli\-ca\-zio\-ni (GNAMPA) of the Isti\-tu\-to Na\-zio\-na\-le di Al\-ta Ma\-te\-ma\-ti\-ca (INdAM).
Guglielmo Feltrin and Alberto Boscaggin are partially supported by the GNAMPA Project 2016
``Problemi differenziali non lineari: esistenza, molteplicit\`{a} e propriet\`{a} qualitative delle soluzioni''.
Alberto Boscaggin also acknowledges the support of the
project ERC Advanced Grant 2013 n.~339958 ``Complex Patterns for Strongly Interacting Dynamical Systems - COMPAT''.

\bibliographystyle{elsart-num-sort}
\bibliography{BF_biblio}

\bigskip
\begin{flushleft}

{\small{\it Preprint}}

{\small{\it May 2016}}

\end{flushleft}

\end{document}